\newtheorem{theo}{Theorem}[section]
\newtheorem{lemm}[theo]{Lemma}
\newtheorem{coro}[theo]{Corollary}
\newtheorem{prop}[theo]{Proposition}
\newtheorem{rema}[theo]{Remark}
\newtheorem{defi}[theo]{Definition}
\newtheorem{properties}[theo]{Properties}
\newcommand{\R}{\mathbb{R}}
\numberwithin{equation}{section}
\newcommand{\N}{\mathbb{N}}
\newcommand{\Z}{\mathbb{Z}}
\def\C{\mathcal{C}}
\def\w{\textit {\textbf w}}
\def\u{\textit {\textbf u}}
\def\v{\textit {\textbf v}}
\def\<{\langle}
\def\>{\rangle}
\def\z{{\textbf {\textit z}}}
\def\0{{\bf 0}}
\def\pup1{{\partial\u\over\partial x_1}}
\title{\bf{\textsl{$L^p$-theory for the exterior Stokes problem with Navier's type slip-without-friction boundary conditions } }}
\author{Anis Dhifaoui \\}
\date{
{\small  UR Analysis and Control of PDEs, UR13ES64,\\
Department of Mathematics\\
Faculty of Sciences of Monastir\\
University of Monastir\\
5019 Monastir, Tunisia}\\
\bigskip
{\small Email adresses: anisdhifaoui123@gmail.com\\   anis.dhifaoui@fsm.rnu.tn}}
\begin{document}

\maketitle

\begin{abstract}
In this paper, we consider the stationary Stokes equations in an exterior domain three-dimensional  under a slip boundary condition without friction. We set the problem in weighted Sobolev spaces in order to control the behavior at infinity of the solutions. In this work, we try to investigate the existence and uniqueness of the weak solutions related to this problem in $L^P-$theory when $p>3$. Our proof are based on obtaining inf-sup conditions that play a fundamental role.\\

\noindent\textbf{Keywords}: Fluid mechanics, Stokes equations, slip boundary condition, exterior domain, Inf-Sup conditions, weighted Sobolev spaces.\\

\end{abstract}
\section{Introduction}
In this paper, we are interested in the Stationary Stokes equations, this problem describes the flow of a viscous and incompressible fluid past an obstacle:
\begin{equation}
\label{Stokes}
\begin{split}
-\Delta\textit{\textbf{u}}+\nabla\pi&=\textit{\textbf{f}}\quad\text{in}\quad\Omega,\\
\mathrm{div }\,\textit{\textbf{u}}&=0\quad\text{in}\quad\Omega.
\end{split}
\end{equation} 
In~\eqref{Stokes}, the flow domain $\Omega$ is a three-dimensional external domain, i.e. the complement of a bounded domain which represents the obstacle,  
the unknowns are the velocity of the fluid $\textit{\textbf{u}}$ and the pressure $\pi$ and $\textit{\textbf{f }}$ the external forces acting on the fluid. For many years, the standard boundary conditions are the Dirichlet or no-slip boundary condition, has been the most widely used, given its success in reproducing the standard velocity profiles for incompressible viscous fluid. These latter conditions prescribe that the fluid adheres to the boundary of the domain. More generally, in the case where the obstacles have an approximate limit, the standard conditions are no longer valid (see for example~\cite{Serrin_book}). Therefore, another approach has been introduced, which assumes that due to the roughness of the boundary and the viscosity of the fluid, there is a stagnant fluid layer near the boundary, which allows the fluid to slip (see for example~\cite{casado}). To overcome the complicated description of the problem, the Navier slip boundary conditions are commonly used.

 These conditions, proposed by Navier~\cite{NAVIER}, can be written as
\begin{equation}
\label{Navier.BC}
\textbf{\textit{u}}\cdot\textbf{\textit{n}}=0,\quad \big((\mathrm{\textbf{D}}\textbf{\textit{u}})\textbf{\textit{n}}+\alpha\textbf{\textit{u}}\big)_{\tau}=\boldsymbol{0}
\quad\text{on}\quad\Gamma,
\end{equation}
where 
\begin{equation*}
\label{def.deformation}
\textbf{D}\textbf{\textit{u}}=\dfrac{1}{2}\left(\nabla\,\textbf{\textit{u}}+\nabla\,\textbf{\textit{u}}^T\right)
\end{equation*}
denotes the rate-of-strain tensor field, $\alpha$ is a scalar friction function, $\textbf{\textit{n}}$ is the unit normal vector to $\Gamma$
 and the notation $(\cdot)_\tau$ denotes the tangential component of a vector on $\Gamma$. The first condition in~\eqref{Navier.BC} is the no-penetration condition and the second condition expresses the fact that the tangential 
velocity is proportional to the tangential stress.\\

\noindent Now, let us recall some notations related to the boundary condition. First, for any vector field $\textbf{\textit{u}}$ on $\Gamma$, we can write
\begin{equation*}
\label{decomposition.trace}
\textbf{\textit{u}}=\textbf{\textit{u}}_{\tau}+(\textbf{\textit{u}}\cdot\textbf{\textit{n}})\textbf{\textit{n}},
\end{equation*}
where $\textbf{\textit{u}}_{\tau}$ is the projection of $\textbf{\textit{u}}$ on the tangent hyper-plan to $\Gamma$. Next, for any  point  $\textbf{\textit{x}}$ on $\Gamma$, one may choose an open neighbourhood $\mathcal{W}$ of $\textbf{\textit{x}}$ in $\Gamma$ small enough to allow the existence of two families of $\mathcal{C}^2$ curves on $\mathcal{W}$ and where the lengths $s_{1}$ and $s_{2}$ along each family of curves are possible system of coordinates. Denoting by $\boldsymbol{\tau}_1$, $\boldsymbol{\tau}_2$ the unit tangent vectors to each family of curves, we have
 $$\textbf{\textit{u}}_{\tau}=(\textbf{\textit{u}}\cdot\boldsymbol{\tau}_{1})\boldsymbol{\tau}_{1}+(\textbf{\textit{u}}\cdot\boldsymbol{\tau}_{2})\boldsymbol{\tau}_{2}.
$$
For a mathematical analysis of the Stokes system satisfying~\eqref{Navier.BC}, the first pionnering paper is due to Solonnikov and Scadilov~\cite{Solonnikov_TMIS_1973} with $\alpha=0$. More recently, Beirao da Veiga~\cite{Veiga_2005} proved existence results for weak and strong solutions in the $L^2$ setting. However, we can prove that 
\begin{eqnarray*}\label{tenseur de déformation sur le bord}
2\big((\mathrm{\textbf{D}}\textbf{\textit{u}})\textbf{\textit{n}}\big)_{\tau}=\nabla_{\tau}(\textbf{\textit{u}}\cdot\textbf{\textit{n}})+\left( \frac{\partial \textbf{\textit{u}}}{\partial \textbf{\textit{n}}}\right)_{\tau}-\sum_{k=1}^{2}\left(\textbf{\textit{u}}_{\tau}\cdot\frac{\partial\textbf{\textit{n}}}{\partial s_{k}} \right) \boldsymbol{\tau}_{k}.
\end{eqnarray*}
On the other hand, we have the following relation:
\begin{eqnarray*}\label{curl and déformation sur le bord}
\mathrm{curl}\,\textbf{\textit{u}}\times\textbf{\textit{n}}=\nabla_{\tau}(\textbf{\textit{u}}\cdot\textbf{\textit{n}})-\left( \frac{\partial \textbf{\textit{u}}}{\partial \textbf{\textit{n}}}\right)_{\tau}-\sum_{k=1}^{2}\left(\textbf{\textit{u}}_{\tau}\cdot\frac{\partial\textbf{\textit{n}}}{\partial s_{k}} \right) \boldsymbol{\tau}_{k}.
\end{eqnarray*}
In the particular case $\textbf{\textit{u}}\cdot\textbf{\textit{n}}=0$ on $\Gamma$, which implies that $$2\big((\mathrm{\textbf{D}}\textbf{\textit{u}})\textbf{\textit{n}}\big)_{\tau}=-\mathrm{curl}\,\textbf{\textit{u}}\times\textbf{\textit{n}}-\sum_{k=1}^{2}\left(\textbf{\textit{u}}_{\tau}\cdot\frac{\partial\textbf{\textit{n}}}{\partial s_{k}} \right) \boldsymbol{\tau}_{k},$$   Comparing with~\eqref{Navier.BC}, the following boundary condition
\begin{eqnarray}\label{NSBC}
\textbf{\textit{u}}\cdot\textbf{\textit{n}}=0\,\,\,\,\text{and}\quad
\mathbf{curl}\,\textbf{\textit{u}}\times\textbf{\textit{n}}=\textbf{\textit{0}}\quad\text{ on }\,\Gamma,
\end{eqnarray}
is in fact a slip-without-friction Navier boundary condition type for more details the interested reader can also refer to~\cite{Veiga, Berselli, Mitrea-2009} and references therein.\\

\noindent Stokes problem with the boundary conditions~\eqref{NSBC} was studied by many authors. In the one hand, in the bounded domains, one can refer for instance to \cite{Medkova-2020,Nour_2011,Bramble-94,Conca-94,Conca-95}. In the other hand, the case for the exterior domains,  we can just mention~\cite{Meslamani_2013,LMR_2020}. The exterior problem~\eqref{Stokes}--\eqref{Navier.BC}, where as far as know, we can mention~\cite{Tartaglione-2020, Russo_JDE_2011, DMR-2019}. Although the Stokes problem set in bounded domains with conditions~\eqref{Navier.BC} has been well studied by various authors 
(see for instance~\cite{Ahmed_2014, Beirao_ADE_2004} or~\cite{Amrouche_M2AS_2016} for the case $\alpha=0$
and \cite{Amrita_2018,Ahmad_2014} for the case $\alpha\neq0$). \\

\noindent The purpose of this work is to study the existence and the uniqueness of the weak solution for the problem~\eqref{Stokes}--\eqref{NSBC}.
Since the flow domain is unbounded, we set the problem in weighted Sobolev spaces in order to control the behavior of functions at
infinity. This functional framework allows us to find solutions with different behaviours to infinity (decay or polynomial growth). The study is based on a $L^p$-theory, $p>3$. Our proof of solvability the problem~\eqref{Stokes}--\eqref{NSBC} is based on a variational formulations obtained and the Inf-Sup conditions.\\

\noindent The outline of this paper is as follows. In Section 2, we introduce the notations and the functional framework based on weighted Sobolev spaces. We shall precise in which sense, the Navier's type slip-without-friction boundary conditions~\eqref{NSBC},  we finish this section by giving a result concerning the Laplace problem with Neumann boundary conditions with data $f$ belongs to $L^{p}(\Omega)$, we got the existence of solutions in $W^{1,p}_{-1}(\Omega)$. In Section 3, we recall one result about the vector potential problem~ (see \cite{Louati_Meslameni_Razafison}) and we prove the Inf-Sup conditions, which plays a crucial role in the existence and uniqueness of solutions. Finally, in Section 4, we conclude with the main result of this paper related to well-posedness of the Stokes problem~\eqref{Stokes}--\eqref{NSBC}. We prove the existence and the uniqueness of weak solutions, when $p>3$.


\section{Notations and Preliminaries}
\subsection{Notations}
We recall the main notations and results, concerning the weighted Sobolev spaces, which we shall use later on.  In what follows, $p$ is a real number in the interval $]1,\infty[$. The dual exponent of $p$ denoted by $p'$ is given by the following relation: 
\begin{equation*}
 \frac{1}{p}+\frac{1}{p'}=1.
\end{equation*}

\noindent We will use bold characters for vector and matrix fields. A point in $\R^3$ is denoted by $\textbf{\textit{x}}=(x_1,x_2,x_3)$ and its
distance to the origin by
$$r=|\textbf{\textit{x}}|=\left(x_1^2+x_2^2+x_3^2\right)^{1/2}.$$

\noindent For any multi-index $\boldsymbol{\lambda}\in\N^3$, we denote
by $\partial^{\boldsymbol{\lambda}}$ the differential operator of order
$\boldsymbol{\lambda}$,
$$\partial^{\boldsymbol{\lambda}}=
{\partial^{|\lambda|}\over\partial_1^{\lambda_1}\partial_2^{\lambda_2}\partial_3^{\lambda_3}},
\quad|\lambda|=\lambda_1+\lambda_2+\lambda_3.$$ \noindent Let $\Omega'$  be a bounded connected open set in $\R^{3}$ with boundary $\partial \Omega'=\Gamma$ of class $\mathcal{C}^{1,1}$ and let $\Omega $ its complement \textit{i.e} $\Omega=\R^{3}\diagdown\overline{\Omega'}$. In this work, we shall also denote by $B_{R}$ the open ball of radius $R>0$ centred at the origin with boundary $\partial B_R$. In particular, since $\Omega'$ is bounded, we can find some $R_{0}$, such that $\Omega'\subset B_{R_{0}}$ and we introduce, for any $R\geq R_{0}$, the set
\begin{equation*}
\Omega_{R}=\Omega \cap B_{R}.
\end{equation*} 
\noindent We denote by $\mathcal{D}(\Omega)$ the space of
$\C^{\infty}$ functions with compact support in $\Omega$, $\mathcal{D}(\overline{\Omega})$ the restriction to $\Omega$ of functions belonging to 
$\mathcal{D}(\R^3)$. 
We recall that $\mathcal{D}'(\Omega)$ is the well-known space of distributions defined on
$\Omega$. We recall that $L^p(\Omega)$ is the well-known Lebesgue real space and for $m\ge1$, we recall that $W^{m,p}(\Omega)$ is the classical Sobolev space. 
We shall write $u\in
W_{loc}^{m,p}(\Omega)$ to mean that $u\in W^{m,p}(\mathcal{O})$, for any
bounded domain $\mathcal{O}$, with $\overline{\mathcal{O}}\subset\Omega$. We denote by $[s]$ the integer part of $s$. For any $k\in\Z$, $\mathcal{P}_k$ stands for the space of polynomials of degree less than or equal to $k$ and $\mathcal{P}^{\Delta}_{k}$ the harmonic polynomials of $\mathcal{P}_{k}$. If ${k}$ is a negative integer, we
set by convention $\mathcal{P}_{k}=\{0\}$.\\

\noindent Given a Banach space $X$, with dual space $X'$ and a closed subspace $Y$ of $X$, we denote by $X'\perp Y$ the subspace of $X'$ orthogonal to $Y$, i.e.
\begin{equation*}
X'\perp Y=\lbrace f \in X'; <f,v>=0\,\, \forall\, v \in Y\rbrace=(X/Y)'.
\end{equation*}
The space $X'\perp Y$ is also called the polar space of $Y$ in $X'$. Finally, as usual, $C>0$ denotes a generic constant the value
of which may change from line to line and even at the same line.\\


\subsection{Weighted Sobolev spaces}
\noindent In order to control the behavior at infinity of our functions and distributions we use for basic weights the quantity  $\rho(\textbf{\textit{x}})=(1+r^2)^{1/2}$ which is equivalent to $r$ at infinity, and to one on any bounded subset of $\R^3$.\\
For $\alpha\in\Z$, we introduce
$$W_{\alpha}^{0,p}(\Omega)=\Big\{u\in\mathcal{D}'(\Omega),\,\rho^{\alpha} u\in L^p(\Omega)\Big\},$$
which is a Banach space equipped with the norm: 
$$\|u\|_{W_{\alpha}^{0,p}(\Omega)}=\|\rho^{\alpha} u\|_{L^p(\Omega)}.$$ 

\noindent For any non-negative integers $m$, real numbers $p>1$ and $\alpha\in \Z$. We define the weighted Sobolev space for $3/p+\alpha\notin \{1,\cdots,m\}$:
$$
W_{\alpha}^{m,p}(\Omega)=\Big\{u\in \mathcal{D}'(\Omega);\,\forall\boldsymbol{\lambda}\in\N^{3}:
\,0\leq |\boldsymbol{\lambda}| \leq m,\,\rho^{\alpha-m+|\boldsymbol{\lambda}|}\partial^{\boldsymbol{\lambda}}u \in L^{p}(\Omega) \Big\}.
$$
It is a reflexive Banach space equipped with the norm:
$$
\|u\|_{W_{\alpha}^{m,p}(\Omega)}= \left(\sum_{0\leqslant|\boldsymbol{\lambda}|\leqslant m}
\|\rho^{\alpha-m+|\boldsymbol{\lambda}|}\partial^{\boldsymbol{\lambda}}u\|^{p}_{L^{p}(\Omega)}\right)^{1/p}.
$$
\noindent We define the semi-norm
$$|u|_{W_{\alpha}^{m,p}(\Omega)}=\left(\sum_{|\boldsymbol{\lambda}|=m}\|\rho^{\alpha}\partial^{\boldsymbol{\lambda}}u\|_{L^p(\Omega)}\right)^{1/p}.$$

\noindent Let us give some examples of such space that will be often used in the remaining of the thesis.
\begin{enumerate}

\item For $m=1$, we have
\begin{align*}
{W_{\alpha}^{1,p}(\Omega)}:={\{}&u \in \mathcal{D}'(\Omega);\,\rho^{\alpha-1}u \in L^{p}(\Omega),\,\rho^{\alpha}\,\nabla\,u \in L^{p}(\Omega) \}
\end{align*}

\item For $m=2$, we have

\begin{equation*}
W_{\alpha +1}^{2,p}(\Omega):=\left\lbrace u\in {W_{\alpha}^{1,p}(\Omega)}, \rho^{k+1}\nabla^{2} u \in L^{p}(\Omega)\right\rbrace, 
\end{equation*}
\end{enumerate}

\begin{rema}\label{remarkk}\quad\\
\noindent Note that if\,\, $\dfrac{3}{p}+\alpha\in \left\lbrace 1,...,m\right\rbrace $, we need to add the logarithmic weight in the definition of weight Sobolev spaces introduced above. The logarithmic weight is defined by $ln(2+|x|^2)$ see~\cite{theseGiroire} for more details. For example if\,\, $3/p+\alpha=1$, we have
\begin{align*}
{{W}_{\alpha}^{1,p}(\Omega)}:={\{}&\textbf{\textit{u}} \in \mathcal{D}'(\Omega);\,\rho^{\alpha-1}(ln(2+r^2))^{-1}\textbf{\textit{u}} \in L^{p}(\Omega),\,\rho^{\alpha}\,\nabla\,\textbf{\textit{u}} \in L^{p}(\Omega) \}.
\end{align*}
\end{rema}

\noindent Now, we present some basic properties on weighted Sobolev spaces. For more details, the reader can refer to  \cite{Amrouche_JMPA_1997, Amrouche_1994, Hanouzet}. 
\begin{properties}\quad
\begin{enumerate}

\item The space $\mathcal{D}(\overline{\Omega})$ is dense in $W^{m,p}_{\alpha}(\Omega)$.\\
\item For any $m\in\N^*$ and $3/p+\alpha\neq 1$, we have the following continuous embedding:
\begin{equation}\label{inclusion.sobolev2}
W_{\alpha}^{m,p}(\Omega)\hookrightarrow W_{\alpha-1}^{m-1,p}(\Omega).
\end{equation}
\item For any $\alpha$, $m\in\Z$ and for any $\lambda \in \N^3$, the mapping
\begin{equation}\label{derive.espaces.poids}
u \in W_{\alpha}^{m,p}(\Omega)\longrightarrow\,\,\partial^{\lambda}u\in W_{\alpha}^{m-|\lambda|,p}(\Omega)
\end{equation}
is continuous.\\

\end{enumerate}
\end{properties}
\noindent The space $W_{\alpha}^{m,p}(\Omega)$  sometimes contains some polynomial functions. Let $j$ be defined as follow:
\begin{equation}
j=\begin{cases}
[m-(3/p+\alpha)] \quad\quad\mathrm{if}\quad 3/p+\alpha \notin \Z^{-},\\
m-3/p-\alpha-1\qquad\qquad \mathrm{otherwise}.
\end{cases}
 \end{equation}
 Then $\mathcal{P}_j$ is the space of all polynomials included in $W_{\alpha}^{m,p}(\Omega)$.

\noindent The norm of the quotient space $W_{\alpha}^{m,p}(\Omega)/\mathcal{P}_{j}$ is given by:
\begin{eqnarray*}
||u||_{W_{\alpha}^{m,p}(\Omega)/\mathcal{P}_{j}}=\inf_{\mu\in\mathcal{P}_{j}}||u+\mu ||_{W_{\alpha}^{m,p}(\Omega)}.
\end{eqnarray*}
All the local properties of $W_{\alpha}^{m,p}(\Omega)$ coincide with those of the corresponding classical Sobolev spaces $W^{m,p}(\Omega)$. Hence, it also satisfies the usual trace theorems on the boundary $\Gamma$. Therefore, we can define the space

$$\mathring{W}_{\alpha}^{m,p}(\Omega)=\lbrace u\in  W_{\alpha}^{m,p}(\Omega),\,\,\, \gamma_{0}u=0,\,\gamma_{1}u=0,\,\cdots, \gamma_{m-1}u=0\rbrace.$$

\noindent Note that when $\Omega=\R^{3}$, we have $\mathring{W}_{\alpha}^{m,p}(\R^{3})=W_{\alpha}^{m,p}(\R^{3})$. The space $\mathcal{D}(\Omega)$ is dense in $\mathring{W}^{m,p}_{\alpha}(\Omega)$. Therefore, the dual space of $\mathring{W}_{\alpha}^{m,p}(\Omega)$, denoting  by $W_{-\alpha}^{-m,p'}(\Omega)$, is a space of distributions with the norm
\begin{equation*}
||\,u\,||_{W_{-\alpha}^{-m,p'}(\Omega)}=\sup_{v \in \mathring{W}_{\alpha}^{m,p}(\Omega)}\dfrac{\left\langle u,v\right\rangle_{W_{-\alpha}^{-m,p'}(\Omega)\times \mathring{W}_{\alpha}^{m,p}(\Omega)} }{||\,v\,||_{W_{\alpha}^{m,p}(\Omega)}}.
\end{equation*}

\noindent We state the Hardy’s inequalities which play a key role in solving elliptic problems.
\begin{theo}\label{Hardy} 
 Let $\Omega$ be an lipschitzian exterior domain. Let $m\geqslant 1$, $\alpha \in \Z$ and $1<p<\infty$.  There exists a constant $C=C(p,\alpha,\Omega)>0$ such that 
 \begin{enumerate}
 \item \begin{eqnarray}\label{poincaré 1}
\forall u\in W^{m,p}_{\alpha}(\Omega),\quad ||u||_{W_{\alpha}^{m,p}(\Omega)/\mathcal{P}_{j'}}\leqslant \vert u\vert_{W_{\alpha}^{m,p}(\Omega)},
\end{eqnarray}
where $j'=\min (j,0)$ and j is the highest degree of polynomials belonging to $W^{m,p}_{\alpha}(\Omega)$.
\item \begin{eqnarray}\label{poincaré 2}
\forall u\in \mathring{W}^{m,p}_{\alpha}(\Omega),\quad ||u||_{W_{\alpha}^{m,p}(\Omega)}\leqslant \vert u\vert_{W_{\alpha}^{m,p}(\Omega)}.
\end{eqnarray}
 \end{enumerate}
\end{theo}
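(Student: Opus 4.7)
The strategy is to reduce both inequalities to the classical weighted Hardy inequality on $\R^{3}$ (cf.~\cite{Hanouzet, Amrouche_1994}), exploiting the fact that $\Omega$ is the complement of a bounded domain so that its nontrivial geometry is concentrated near the compact boundary $\Gamma$. I would first establish part (1) and then deduce part (2) as an immediate corollary, by noting that any polynomial in $\mathcal{P}_{j'}$ whose traces of order $0,1,\dots,m-1$ all vanish on the smooth hypersurface $\Gamma$ is necessarily zero; hence $\mathcal{P}_{j'}\cap\mathring{W}_\alpha^{m,p}(\Omega)=\{0\}$ and the quotient norm appearing in (1) reduces to the full norm on $\mathring{W}_\alpha^{m,p}(\Omega)$, yielding (2).

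For part (1) I would argue by contradiction and compactness. Suppose no such constant exists; then there is a sequence $(u_k)\subset W^{m,p}_\alpha(\Omega)$ with $\|u_k\|_{W^{m,p}_\alpha(\Omega)/\mathcal{P}_{j'}}=1$ while $|u_k|_{W^{m,p}_\alpha(\Omega)}\to 0$. Choose representatives $v_k$ of the equivalence classes with $\|v_k\|_{W^{m,p}_\alpha(\Omega)}\le 2$. By reflexivity of the weighted Sobolev space, a subsequence converges weakly, $v_k\rightharpoonup v$ in $W^{m,p}_\alpha(\Omega)$; by Rellich--Kondrachov the convergence is strong in $W^{m-1,p}(\Omega_R)$ for every $R>R_0$. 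Lower semicontinuity of the seminorm yields $|v|_{W^{m,p}_\alpha(\Omega)}=0$, so $v$ is a polynomial on the connected set $\Omega$ of degree at most $m-1$, and membership in the weighted space forces $v\in\mathcal{P}_j$.

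To close the argument I would use a cutoff decomposition $v_k=\chi v_k+(1-\chi)v_k$, where $\chi\in\C^\infty(\R^{3})$ equals one for $|x|\ge 2R_0$ and vanishes on a neighbourhood of $\Gamma$. The extension by zero of $\chi v_k$ belongs to $W^{m,p}_\alpha(\R^{3})$; applying the $\R^{3}$ Hardy inequality to it and combining with a classical Poincar\'e--Wirtinger estimate on the bounded piece $\Omega_{2R_0}$, I would upgrade the local strong convergence to strong convergence of $v_k$ modulo $\mathcal{P}_{j'}$ in $W^{m,p}_\alpha(\Omega)$. Passing to the limit would then contradict the normalisation $\|u_k\|_{W^{m,p}_\alpha(\Omega)/\mathcal{P}_{j'}}=1$, once one verifies that the polynomial limit $v\in\mathcal{P}_j$ can be absorbed into $\mathcal{P}_{j'}$ after an appropriate adjustment of the representatives.

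The main obstacle will be exactly this last identification step: transferring the weak limit $v\in\mathcal{P}_j$ into $\mathcal{P}_{j'}$, where the strict inclusion $\mathcal{P}_{j'}\subset\mathcal{P}_j$ reflects the extra control afforded by the compact boundary $\Gamma$ (boundary traces pin down non-constant polynomial growth that would otherwise be invisible to the seminorm). A secondary difficulty is the critical regime $3/p+\alpha\in\{1,\dots,m\}$ highlighted in Remark~\ref{remarkk}, where logarithmic weights must be inserted throughout; the same scheme applies using the logarithmic variant of the $\R^{3}$ Hardy inequality, but the bookkeeping of the weights and the slight change in the polynomial degree $j$ require a separate careful treatment.
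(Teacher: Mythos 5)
The paper itself contains no proof of this theorem: it is quoted from the cited references (Amrouche--Girault--Giroire 1994/1997 and Giroire's thesis), so there is no internal argument to compare against. Your scheme --- a cutoff reduction to the known $\R^3$ Hardy inequality combined with a Peetre--Tartar contradiction-and-compactness argument on the bounded part --- is precisely the strategy of those references, and most of your steps are sound: weak compactness of bounded representatives, local Rellich compactness, lower semicontinuity of the seminorm, extension by zero of $\chi v_k$, and the patching $w_k=\chi w_k+(1-\chi)w_k$ with the commutator terms controlled by the strongly convergent lower-order derivatives on the annulus. The deduction of part (2) from part (1) also works, though you should make it quantitative: since $\mathcal{P}_{j'}$ is at most the constants, write $\|\mu\|_{W^{m,p}_{\alpha}(\Omega)}\le C\|\gamma_0\mu\|_{L^p(\Gamma)}=C\|\gamma_0(u+\mu)\|_{L^p(\Gamma)}\le C'\|u+\mu\|_{W^{m,p}_{\alpha}(\Omega)}$ for a near-optimal $\mu$, then apply the triangle inequality; injectivity of the trace on $\mathcal{P}_{j'}$ alone is not enough.

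The one genuine gap is the step you flag yourself, and as the statement is written it cannot be closed. Your weak limit $v$ has vanishing seminorm, hence is a polynomial of degree at most $m-1$ belonging to the space, i.e.\ $v\in\mathcal{P}_{\min(j,m-1)}$; to contradict $\|v_k\|_{W^{m,p}_{\alpha}(\Omega)/\mathcal{P}_{j'}}=1$ you need $v\in\mathcal{P}_{j'}$ with $j'=\min(j,0)$, and nothing forces this when $m\ge2$ and $j\ge1$ --- the seminorm sees only derivatives of order exactly $m$, so boundary traces give no extra control over low-degree polynomial growth. Indeed, for $m\ge2$ and $j\ge1$ the inequality as stated is false: $u(x)=x_1$ lies in $W^{m,p}_{\alpha}(\Omega)$, has $|u|_{W^{m,p}_{\alpha}(\Omega)}=0$, yet has positive distance to the constants. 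The resolution is that the correct exponent is $j'=\min(j,m-1)$ (which is how the inequality appears in the cited references); with that correction the weak limit lands in $\mathcal{P}_{j'}$ automatically and your argument closes. For $m=1$ --- the only case the paper actually uses --- the two exponents coincide and your proof is complete once the patching estimate is written out; the critical values $3/p+\alpha\in\{1,\dots,m\}$ do require the logarithmic variant throughout, as you note.
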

\noindent The inequalities~\eqref{poincaré 1} and~\eqref{poincaré 2} are the reason of choosing the weight functions in the definition of $W^{m,p}_{\alpha}(\Omega)$. The proof of this theorem can be found in the case $\Omega=\R^3$ in~\cite{Amrouche_1994} and its extension to an exterior domain $\Omega$ in~\cite{Amrouche_JMPA_1997} and in~\cite{theseGiroire} for $p=2$. From Theorem~\ref{Hardy} and the Sobolev embeddings, we have the following continuous and dense embedding:

\begin{eqnarray}
W^{1,p}_{0}(\Omega)\hookrightarrow L^{\frac{3p}{3-p}}(\Omega),\quad \text{ if }\quad 1<p<3.
\end{eqnarray} 
By duality, we have

\begin{eqnarray}\label{123}
 L^{\frac{3p'}{3+p'}}(\Omega)\hookrightarrow W^{-1,p'}_{0}(\Omega),\quad \text{ if }\quad 3/2<p'<\infty.
\end{eqnarray} 

\noindent The proof of the following theorem can be found in \cite[Proposition 2.1]{Louati_Meslameni_Razafison}.
\begin{theo}\quad\\
Let $k$, $k'$ be real numbers. Let $\lambda$ be a polynomial that belongs to $W^{1,p}_{k}(\Omega)+ W^{1,q}_{k'}(\Omega)$. Then $\lambda$ belongs to $\mathcal{P}_{\gamma}$ where
\begin{equation*}
\gamma=\max \Big([1-\frac{3}{p}-k,[1-\frac{3}{q}-k'\Big).
\end{equation*}
\end{theo}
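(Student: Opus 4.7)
The plan is to extract the asymptotic growth of $\lambda$ at infinity from a decomposition $\lambda = u + v$, with $u \in W^{1,p}_k(\Omega)$ and $v \in W^{1,q}_{k'}(\Omega)$, and to contradict the natural lower bound on a polynomial of too large a degree. Assuming $\lambda$ has degree $d \in \N$, I first invoke the continuous embedding \eqref{inclusion.sobolev2} to obtain $\rho^{k-1} u \in L^p(\Omega)$ and $\rho^{k'-1} v \in L^q(\Omega)$; in the boundary cases $3/p+k=1$ or $3/q+k'=1$ the logarithmic variant of Remark \ref{remarkk} provides the same with an additional, slowly varying, factor.

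Fix $R \geq 2R_0$ and consider the annulus $A_R = \{x \in \R^3 : R < |x| < 2R\} \subset \Omega$. Since the leading homogeneous component of $\lambda$ is a nonzero polynomial on $\R^3$, a direct computation in spherical coordinates (restricting, if necessary, to a cone where this leading spherical component is bounded below) gives
\[
\|\lambda\|_{L^1(A_R)} \geq c_0\, R^{d+3}
\]
for some $c_0 > 0$ and all $R$ sufficiently large. On the other hand, using $\rho \sim R$ and $|A_R| \sim R^3$ together with H\"older's inequality on the set $A_R$ of finite measure,
\[
\|u\|_{L^1(A_R)} \leq C R^{3(1-1/p)} R^{1-k} \|\rho^{k-1} u\|_{L^p(A_R)} = C R^{4-3/p-k} \varepsilon_R,
\]
where $\varepsilon_R := \|\rho^{k-1} u\|_{L^p(A_R)} \to 0$ as $R \to \infty$ by dominated convergence (tail of an $L^p$-integrable function); analogously, $\|v\|_{L^1(A_R)} \leq C R^{4-3/q-k'} \varepsilon'_R$ with $\varepsilon'_R \to 0$.

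Combining through the triangle inequality yields
\[
c_0 R^{d+3} \leq C R^{4-3/p-k} \varepsilon_R + C R^{4-3/q-k'} \varepsilon'_R.
\]
If $d + 3 \geq \max(4-3/p-k,\, 4-3/q-k')$, dividing by $R^{d+3}$ and letting $R \to \infty$ forces $c_0 \leq 0$, which is impossible. Hence $d < \max(1-3/p-k,\, 1-3/q-k')$, and since $d$ is an integer, $d \leq \max\bigl([1-3/p-k],\, [1-3/q-k']\bigr) = \gamma$, so $\lambda \in \mathcal{P}_\gamma$.

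The main obstacle is the treatment of the critical cases $3/p+k=1$ or $3/q+k'=1$, where the embedding involves a logarithmic weight; this inserts only a factor of order $(\ln R)^{-1}$ into the tail quantities $\varepsilon_R$, $\varepsilon'_R$, which still tend to zero and therefore do not alter the asymptotic comparison.
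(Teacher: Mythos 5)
The paper does not actually prove this theorem: it is quoted verbatim from \cite{Louati_Meslameni_Razafison} (Proposition 2.1 there), so there is no in-paper argument to compare against. Judged on its own, your annulus argument is correct and self-contained: the lower bound $\|\lambda\|_{L^1(A_R)}\ge c_0R^{d+3}$ for the leading homogeneous component, the H\"older estimate $\|u\|_{L^1(A_R)}\le CR^{4-3/p-k}\|\rho^{k-1}u\|_{L^p(A_R)}$ with a vanishing tail, and the passage from $d<\max\bigl(1-\tfrac3p-k,\,1-\tfrac3q-k'\bigr)$ to $d\le\gamma$ via monotonicity of the integer part are all sound. This is essentially the standard way such statements are proved (testing the polynomial against the weight on dyadic shells), only packaged through $L^1$ bounds rather than through the explicit characterization of which $\mathcal{P}_j$ embeds into $W^{1,p}_k(\Omega)$.

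One remark on the critical case $3/p+k=1$: your closing sentence has the logarithm on the wrong side. The definition with the logarithmic weight makes $\rho^{k-1}(\ln(2+r^2))^{-1}u$ integrable, so H\"older forces you to pair it with $\rho^{1-k}\ln(2+r^2)$, and the prefactor becomes $CR^{4-3/p-k}\ln R$ --- a \emph{growing} correction, not a helpful $(\ln R)^{-1}$. The argument still closes, but for a different reason than you state: in that case $1-3/p-k=0$ is an integer, so the only borderline degree is $d=0$, where no contradiction is required since constants do lie in $\mathcal{P}_\gamma$; for every $d\ge 1$ the power gap $R^{-1}$ absorbs the $\ln R$. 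You should make that explicit rather than asserting that the logarithm is harmless by fiat.
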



\subsection{Functional spaces and Trace Results}
\noindent The purpose of this section is to introduce some weighted Sobolev spaces that are specific for the study of the Stokes problem~\eqref{Stokes} with the Navier boundary conditions~\eqref{NSBC}. Let us first introduce some notations. For any vector fields $\textit{\textbf{u}}$ and $\textit{\textbf{v}}$ of $\R^3$, we define 
$$\textit{\textbf{u}}\times\textit{\textbf{v}}=(u_2v_3-v_3u_2,\,u_3v_1-u_1v_3,\,u_1v_2-u_2v_1)^T$$
and $$\textbf{curl }\textit{\textbf{u}}=\nabla\times\textit{\textbf{u}}.$$

\noindent We note that, the vector-valued Laplace operator of a vector field $\textbf{\textit{v}}$ is equivalently defined by
\begin{equation}\label{delt1}
\Delta\,\textbf{\textit{v}}=\nabla\,\mathrm{div}\,\textbf{\textit{v}}-\mathbf{curl}\,\mathbf{curl}\,\textbf{\textit{v}}.
\end{equation}

\noindent We start by introducing the following spaces for $\alpha \in \Z$ and $1<p<\infty$.
\begin{defi}
The space $H_{\alpha}^{p}(\mathbf{curl},\Omega)$ is defined by:
\begin{equation*}
H_{\alpha}^{p}(\mathbf{curl},\Omega)=\left\lbrace \textbf{\textit{v}}\in W_{\alpha}^{0,p}(\Omega);\mathbf{curl}\,\, \textbf{\textit{v}}\in W_{\alpha+1}^{0,p}(\Omega)\right\rbrace \,,
\end{equation*}
and is provided with the norm:
$$ 
\|\textbf{\textit{v}}\|_{H_{\alpha}^{p}(\mathbf{curl},\Omega)}=\left( {\|\textbf{\textit{v}}\|^{p}}_{W_{\alpha}^{0,p}(\Omega)}+{\|\mathbf{curl}\,\, \textbf{\textit{v}}\|^{p}}_{W_{\alpha+1}^{0,p}(\Omega)}\right) ^{\frac{1}{p}}.
 $$
The space  $H_{\alpha}^{p}(\mathrm{div},\Omega)$ is defined by:
\begin{equation*}
H_{\alpha}^{p}(\mathrm{div},\Omega)=\left\lbrace \textbf{\textit{v}}\in W_{\alpha}^{0,p}(\Omega);\mathrm{div} \,\, \textbf{\textit{v}}\in W_{\alpha+1}^{0,p}(\Omega)\right\rbrace \,,
\end{equation*}
and is provided with the norm:
$$ 
\|\textbf{\textit{v}}\|_{H_{\alpha}^{p}(\mathrm{div},\Omega)}=\left( \|\textbf{\textit{v}}\|^{p}_{W_{\alpha}^{0,p}(\Omega)}+\|\mathrm{div} \,\, \textbf{\textit{v}}\|^{p}_{{W}_{\alpha+1}^{0,p}(\Omega)}\right) ^{\frac{1}{p}}.
 $$
Finally, we set 
\begin{equation*}
 \textbf{\textit{X}}_{\alpha}^{p}(\Omega)= H_{\alpha}^{p}(\mathbf{curl},\Omega)\cap H_{\alpha}^{p}(\mathrm{div},\Omega).
\end{equation*}
It is provided with the norm
$$ \textbf{\textit{X}}_{\alpha}^{p}(\Omega)=\left( \|\textbf{\textit{v}}\|^{p}_{W_{\alpha}^{0,p}(\Omega)}+\|\mathrm{div} \,\, \textbf{\textit{v}}\|^{p}_{{W}_{\alpha+1}^{0,p}(\Omega)}+\|\mathbf{curl}\,\, \textbf{\textit{v}}\|^{p}_{W_{\alpha+1}^{0,p}(\Omega)}\right) ^{\frac{1}{p}}.$$
\noindent
These definitions will be also used when $\Omega$ is replaced by ${\R}^{3}$.
\end{defi}
\noindent
Note that $\mathcal{D}(\overline{\Omega})$ is dense in $H_{\alpha}^{p}(\mathrm{div},\Omega)$ and in $H_{\alpha}^{p}(\mathbf{curl},\Omega)$ and so in $\textbf{\textit{X}}_{\alpha}^{p}(\Omega)$. For the proof, one can use the same arguments than for the proof of the density of $\mathcal{D}(\overline{\Omega})$ in $W_{\alpha}^{m,p}(\Omega)$ (see \cite{theseGiroire, Hanouzet}). If $\textbf{\textit{v}}$ belongs to $H_{\alpha}^{p}(\mathrm{div},\Omega)$, then $\textbf{\textit{v}}$ has normal trace $\textbf{\textit{v}}\cdot \textbf{\textit{n}}$ in $W^{-1/p,p}(\Gamma)$, where $W^{-1/p,p}(\Gamma)$  denotes the dual space of $W^{1/p,p'}(\Gamma)$. By the same way, if $\textbf{\textit{v}}$ belongs to $H_{\alpha}^{p}(\mathbf{curl},\Omega)$, then $\textbf{\textit{v}}$ has a tangential trace $\textbf{\textit{v}}\times\textbf{\textit{n}}$ that belongs to  $W^{-1/p,p}(\Gamma)$. Similarly as in bounded domain, we have the trace theorems i.e, for $\alpha\in\Z$ there exists a constant $C>0$, such that
\begin{equation}\label{3.theorem de trace de H div}
 \forall \textbf{\textit{v}} \in H_{\alpha}^{p}(\mathrm{div},\Omega),\,\, ||\textbf{\textit{v}}\cdot \textbf{\textit{n}}||_{ W^{-1/p,p}(\Gamma)}\leq C||\textbf{\textit{v}}||_{H_{\alpha}^{p}(\mathrm{div},\Omega)},
\end{equation}
\begin{equation}\label{3.theorem de trace de H curl}
  \forall \textbf{\textit{v}} \in H_{\alpha}^{p}(\mathbf{curl},\Omega),\,\, ||\textbf{\textit{v}}\times \textbf{\textit{n}}||_{ W^{-1/p,p}(\Gamma)}\leq  C ||\textbf{\textit{v}}||_{H_{\alpha}^{p}(\mathbf{curl},\Omega)}
\end{equation}
and the following Green's formulas holds:
For any $\textbf{\textit{v}} \in H_{\alpha}^{p}(\mathrm{div},\Omega)$ and $\varphi \in W_{-\alpha}^{1,p'}(\Omega)$, we have
\begin{equation}\label{FG1}
\left\langle \textbf{\textit{v}}\cdot\textbf{\textit{n}}, \varphi\right\rangle_{\Gamma}=\int_{\Omega}\textbf{\textit{v}}\cdot\nabla\,\varphi\,dx+\int_{\Omega}\varphi\,\mathrm{div}\,\textbf{\textit{v}}\,dx.
\end{equation}

\noindent For any $\textbf{\textit{v}} \in H_{\alpha}^{p}(\mathbf{curl},\Omega)$ and $\boldsymbol{\varphi} \in W_{-\alpha}^{1,p'}(\Omega)$, we have
\begin{equation}\label{FG2}
\left\langle \textbf{\textit{v}}\times\textbf{\textit{n}}, \boldsymbol{\varphi}\right\rangle_{\Gamma}=\int_{\Omega}\textbf{\textit{v}}\cdot\mathbf{curl}\,\boldsymbol{\varphi}\,dx-\int_{\Omega}\mathbf{curl}\,\textbf{\textit{v}}\cdot\boldsymbol{\varphi}dx,
\end{equation}
where $\left\langle .,. \right\rangle_{\Gamma}$ denotes the duality pairing between $W^{-1/p,p}(\Gamma)$ and $W^{1/p,p'}(\Gamma)$.\\

\noindent The closures of $\boldsymbol{\mathcal{D}}(\Omega)$ in $H_{\alpha}^{\,p}(\mathrm{div},\Omega)$ and in $H_{\alpha}^{\,p}(\mathbf{curl},\Omega)$ are denoted respectively by $\mathring{H}_{\alpha}^{\,p}(\mathbf{curl},\Omega)$ and $\mathring{H}_{\alpha}^{\,p}(\mbox{div},\Omega)$ and can be characterized respectively by:
$$
 \mathring{H}_{\alpha}^{\,p}(\mathbf{curl},\Omega)=\left\lbrace \textbf{\textit{v}}\in
 H_{\alpha}^{p}(\mathbf{curl},\Omega);\,\,\textbf{\textit{v}}\times \textbf{\textit{n}}=\textbf{0} \,\,\text{on}\,\, \Gamma\right\rbrace,
$$\vspace{.001cm}
$$
 \mathring{H}_{\alpha}^{\,p}(\mbox{div},\Omega)=\left\lbrace \textbf{\textit{v}}\in
 H_{\alpha}^{p}(\mbox{div},\Omega);\,\,\textbf{\textit{v}}\cdot\textbf{\textit{n}}=0\,\, \text{on}\,\,\Gamma\right\rbrace.
$$

\noindent For $1<p<\infty$ and $\alpha\in \Z$, we denote by $[\mathring{H}_{\alpha}^{p}(\mathrm{div};\Omega)]'$ and $[\mathring{H}_{\alpha}^{p}(\mathbf{curl};\Omega)]'$ the dual spaces of $\mathring{H}_{\alpha}^{\,p}(\mathbf{curl},\Omega)$ and $\mathring{H}_{\alpha}^{\,p}(\mbox{div},\Omega)$ respectively. We can characterize theses spaces as it is stated in the following proposition.

\begin{prop}
\label{caracterisation du H'}\quad\\

 \begin{enumerate}
\item A distribution  $\textbf{\textit{f}}$ belongs to $[\mathring{H}_{\alpha}^{p}(\mathrm{div};\Omega)]'$ if and only if there exist functions $\boldsymbol{\psi}\in W^{0,p'}_{-\alpha}(\Omega)$ and $\chi\in W^{0,p'}_{-\alpha-1}(\Omega)$, 
such that $\textbf{\textit{f}}=\boldsymbol{\psi}+\nabla\chi$. Moreover
\begin{eqnarray}\label{estimation de Hdiv}
\|\boldsymbol{\psi}\|_{W^{0,p'}_{-\alpha}(\Omega)} + \|\chi\|_{W^{0,p'}_{-\alpha-1}(\Omega)}\leq C\|\textbf{\textit{f }}\|_{[\mathring{H}_{\alpha}^{p}(\mathrm{div};\Omega)]'}.
\end{eqnarray}
 
\item A distribution  $\textbf{\textit{f}}$ belongs to $[\mathring{H}_{\alpha}^{p}(\mathbf{curl},\Omega)]'$ if and only if there exist functions  $\psi\in W^{0,p'}_{-\alpha}(\Omega)$ and $\chi\in W^{0,p'}_{-\alpha-1}(\Omega)$, such that $\textbf{\textit{f}}=\psi+\mathbf{curl}\,\chi$. Moreover
\begin{eqnarray}
\Vert \psi\Vert_{W^{0,p'}_{-\alpha}(\Omega)} + \Vert \chi\Vert_{W^{0,p'}_{-\alpha-1}(\Omega)}\leq C \Vert \,\textbf{\textit{f}}\,\Vert_{[\mathring{H}_{\alpha}^{p}(\mathbf{curl},\Omega)]'}.
\end{eqnarray}
 
 \end{enumerate}
 \end{prop}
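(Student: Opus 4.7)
The approach is to represent each of $\mathring{H}_\alpha^{p}(\mathrm{div},\Omega)$ and $\mathring{H}_\alpha^{p}(\mathbf{curl},\Omega)$ as a closed subspace of a weighted product Lebesgue space via the canonical graph embedding, and then to combine the Hahn--Banach extension theorem with a Riesz representation of the dual of that product. The only mildly delicate ingredient is the identification $\big(W^{0,p}_\beta(\Omega)\big)' \simeq W^{0,p'}_{-\beta}(\Omega)$ through the ordinary integral pairing; this follows from the fact that multiplication by $\rho^\beta$ is a linear isometry of $W^{0,p}_\beta(\Omega)$ onto $L^p(\Omega)$, so that standard $L^p$--$L^{p'}$ duality transfers to the weighted setting. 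I will take this identification as known.

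For part (1), set $\mathbf{E} := W_\alpha^{0,p}(\Omega) \times W_{\alpha+1}^{0,p}(\Omega)$ and introduce
$$T : \mathring{H}_\alpha^{p}(\mathrm{div},\Omega) \longrightarrow \mathbf{E}, \qquad T(\textbf{\textit{v}}) = \big(\textbf{\textit{v}},\,\mathrm{div}\,\textbf{\textit{v}}\big),$$
which by the very definition of the graph norm is an isometry onto a closed subspace. Given $\textbf{\textit{f}} \in [\mathring{H}_\alpha^{p}(\mathrm{div},\Omega)]'$, the bounded form $T(\textbf{\textit{v}}) \mapsto \langle \textbf{\textit{f}}, \textbf{\textit{v}}\rangle$ on the image of $T$ admits, by Hahn--Banach, a norm-preserving extension to $\mathbf{E}$. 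The duality $\mathbf{E}' \simeq W^{0,p'}_{-\alpha}(\Omega) \times W^{0,p'}_{-\alpha-1}(\Omega)$ then yields $\boldsymbol{\psi} \in W_{-\alpha}^{0,p'}(\Omega)$ and $\chi \in W_{-\alpha-1}^{0,p'}(\Omega)$ with combined norm controlled by $\|\textbf{\textit{f}}\|_{[\mathring{H}_\alpha^{p}(\mathrm{div},\Omega)]'}$, such that
$$\langle \textbf{\textit{f}}, \textbf{\textit{v}}\rangle = \int_{\Omega} \boldsymbol{\psi}\cdot \textbf{\textit{v}}\,dx - \int_{\Omega} \chi\,\mathrm{div}\,\textbf{\textit{v}}\,dx \quad \text{for all }\textbf{\textit{v}} \in \mathring{H}_\alpha^{p}(\mathrm{div},\Omega),$$
the sign in front of $\chi$ being a cosmetic choice. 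Restricting to $\textbf{\textit{v}} \in \mathcal{D}(\Omega)$ and using the distributional identity $\langle \nabla\chi, \textbf{\textit{v}}\rangle = -\int_\Omega \chi\,\mathrm{div}\,\textbf{\textit{v}}\,dx$ gives $\textbf{\textit{f}} = \boldsymbol{\psi} + \nabla \chi$ in $\mathcal{D}'(\Omega)$, and the estimate~\eqref{estimation de Hdiv} is exactly the norm preservation of Hahn--Banach. Conversely, for any such $\boldsymbol{\psi}, \chi$, the right-hand side of the displayed equality defines a continuous linear form on $\mathring{H}_\alpha^{p}(\mathrm{div},\Omega)$ by the weighted H\"older inequality, and coincides with $\textbf{\textit{f}}$ on $\mathcal{D}(\Omega)$, which is dense in $\mathring{H}_\alpha^{p}(\mathrm{div},\Omega)$ by construction.

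Part (2) is entirely parallel, with the graph embedding $\textbf{\textit{v}} \mapsto (\textbf{\textit{v}}, \mathbf{curl}\,\textbf{\textit{v}})$ into the same product $\mathbf{E}$. Hahn--Banach produces $\boldsymbol{\psi} \in W_{-\alpha}^{0,p'}(\Omega)$ and $\boldsymbol{\chi} \in W_{-\alpha-1}^{0,p'}(\Omega)$ with
$$\langle \textbf{\textit{f}}, \textbf{\textit{v}}\rangle = \int_{\Omega} \boldsymbol{\psi}\cdot \textbf{\textit{v}}\,dx + \int_{\Omega} \boldsymbol{\chi}\cdot \mathbf{curl}\,\textbf{\textit{v}}\,dx,$$
and testing on $\mathcal{D}(\Omega)$ via $\int_\Omega \boldsymbol{\chi}\cdot \mathbf{curl}\,\textbf{\textit{v}}\,dx = \langle \mathbf{curl}\,\boldsymbol{\chi}, \textbf{\textit{v}}\rangle$ gives $\textbf{\textit{f}} = \boldsymbol{\psi} + \mathbf{curl}\,\boldsymbol{\chi}$. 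The converse and the estimate proceed as in part (1), with the Green formula~\eqref{FG2} (whose boundary contribution vanishes for $\textbf{\textit{v}} \in \mathring{H}_\alpha^{p}(\mathbf{curl},\Omega)$, since then $\textbf{\textit{v}}\times \textbf{\textit{n}} = 0$ on $\Gamma$) replacing~\eqref{FG1}. The main obstacle is thus the duality identification announced in the first paragraph; everything else is a direct application of Hahn--Banach together with density of $\mathcal{D}(\Omega)$ in the two spaces and the weighted H\"older inequality.
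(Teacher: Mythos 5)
Your argument is correct, and it is the standard Hahn--Banach/graph-embedding proof of such dual characterizations: the paper itself gives no proof, deferring to \cite[Proposition II.2]{dhifa2020}, where precisely this type of argument (isometric embedding into a product of weighted Lebesgue spaces, norm-preserving extension, Riesz representation, then restriction to $\mathcal{D}(\Omega)$) is used. The only ingredient you assume, the identification $\big(W^{0,p}_\beta(\Omega)\big)'\simeq W^{0,p'}_{-\beta}(\Omega)$ via the isometry $u\mapsto\rho^\beta u$, is indeed immediate and standard, so nothing essential is missing.
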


\noindent The proof of Proposition~\ref{caracterisation du H'} can be found in \cite[Proposition II.2]{dhifa2020}. As a consequence of Proposition~\ref{caracterisation du H'} and the imbedding~\eqref{inclusion.sobolev2} we have, for any $\alpha\in\Z$ and $1<p<\infty$, the following imbeddings
\begin{equation}\label{inclusion.Hdiv}
[\mathring{H}_\alpha^{p}(\mathrm{div};\Omega)]'\subset W_{-\alpha-1}^{-1,p'}(\Omega)
\end{equation} 
and
\begin{equation}\label{inclusion.Hcurl}
[\mathring{H}_\alpha^{p}(\mathrm{curl};\Omega)]'\subset W_{-\alpha-1}^{-1,p'}(\Omega).
\end{equation}

\noindent Let us consider the following space:
\begin{eqnarray*}
\textbf{E}^{p}(\Omega)=\lbrace \textbf{\textit{v}}\in W^{1,p}_{0}(\Omega); \Delta\, \textbf{\textit{v}}\in [\mathring{H}_{-1}^{p'}(\mathrm{div},\Omega)]'\rbrace.
\end{eqnarray*}

equipped with the following norm
\begin{eqnarray*}
\Vert \textbf{\textit{v}}\Vert_{\textbf{E}^{p}(\Omega)}=\Vert \textbf{\textit{v}}\Vert_{W^{1,p}_{0}(\Omega)}+\Vert \Delta \textbf{\textit{v}}\Vert_{[\mathring{H}_{-1}^{p'}(\mathrm{div},\Omega)]'}.
\end{eqnarray*}
We have the following preliminary result.
\begin{lemm}

 The space $\mathcal{D}(\overline{\Omega})$ is dense in $\textbf{E}^{p}(\Omega)$.\\
\end{lemm}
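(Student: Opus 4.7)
The plan is to establish density by a two-step truncation-and-regularization procedure. Given $\v \in \textbf{E}^p(\Omega)$, I would first approximate $\v$ by elements having compact support in $\overline{\Omega}$ via a cutoff at infinity, and then regularize each such approximant on a bounded Lipschitz domain.

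\textit{Step 1 (Truncation at infinity).} Let $\eta_R \in \mathcal{D}(\R^3)$ satisfy $\eta_R = 1$ on $B_R$, $\eta_R = 0$ outside $B_{2R}$, $|\nabla \eta_R| \leq C/R$ and $|\Delta \eta_R| \leq C/R^2$, and set $\v_R = \eta_R \v$. The convergence $\v_R \to \v$ in $W^{1,p}_0(\Omega)$ is routine: on the annulus $A_R = B_{2R}\setminus B_R$ where $\rho \sim R$, the cutoff derivatives are absorbed by the weights, and one concludes by dominated convergence on $\rho^{-1}\v$ and $\nabla\v$ in $L^p(\Omega)$. The delicate point is $\Delta \v_R \to \Delta \v$ in $[\mathring{H}^{p'}_{-1}(\mathrm{div}, \Omega)]'$; expanding
$$\Delta \v_R - \Delta \v = (\eta_R - 1)\Delta \v + 2\,\nabla \eta_R \cdot \nabla \v + (\Delta \eta_R)\,\v,$$
I would apply Proposition~\ref{caracterisation du H'}(1) to write $\Delta \v = \boldsymbol{\psi} + \nabla \chi$ with $\boldsymbol{\psi} \in W^{0,p}_1(\Omega)$ and $\chi \in W^{0,p}_0(\Omega)$. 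The two commutator terms lie in $W^{0,p}_1(\Omega)$ with norm tending to zero as $R\to\infty$ (the $1/R$ and $1/R^2$ factors compensate $\rho \lesssim R$ on $A_R$), while the product rule rewrites
$(\eta_R - 1) \Delta \v = \bigl[(\eta_R - 1)\boldsymbol{\psi} - (\nabla \eta_R)\chi\bigr] + \nabla\bigl((\eta_R - 1)\chi\bigr)$
as a $W^{0,p}_1(\Omega)$ term plus the gradient of a $W^{0,p}_0(\Omega)$ term, each of whose weighted $L^p$ norms vanishes by dominated convergence. Proposition~\ref{caracterisation du H'}(1) together with the estimate~\eqref{estimation de Hdiv} then yields the convergence in the dual space.

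\textit{Step 2 (Regularization on a bounded domain).} Once $\v_R$ has compact support in $\overline{\Omega}\cap\overline{B_{2R}}$, the problem reduces to the bounded Lipschitz domain $\Omega_{3R}$. A classical partition of unity, local flattening of $\Gamma$, inward translation and mollification — the same scheme used for density of $\mathcal{D}(\overline{\Omega})$ in $W^{m,p}_\alpha(\Omega)$ cited in the text — produces approximants in $\mathcal{D}(\overline{\Omega_{3R}})$ converging to $\v_R$ in $W^{1,p}(\Omega_{3R})$ and with Laplacians converging in the corresponding local dual space. After multiplication by a further cutoff supported in $B_{3R}$, these elements lie in $\mathcal{D}(\overline{\Omega})$ and approximate $\v_R$ in $\textbf{E}^p(\Omega)$; a diagonal extraction against Step 1 concludes.

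The main obstacle is precisely Step 1, namely the convergence of $(\eta_R - 1)\Delta \v$ in $[\mathring{H}^{p'}_{-1}(\mathrm{div}, \Omega)]'$. This dual norm has no direct explicit expression, which blocks any naive pointwise-truncation argument, so Proposition~\ref{caracterisation du H'} is indispensable: it converts the dual-norm estimates into classical weighted $L^p$ convergence on the annulus $A_R$, where dominated convergence closes the argument.
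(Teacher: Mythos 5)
Your proposal is correct in outline but follows a genuinely different route from the paper. You argue constructively: cut off at infinity, then regularize near $\Gamma$ by the classical localization--translation--mollification scheme, and you handle the only nonstandard point --- convergence of $\Delta\textbf{\textit{v}}_R$ in the dual norm --- by passing through the decomposition $\Delta\textbf{\textit{v}}=\boldsymbol{\psi}+\nabla\chi$ of Proposition~\ref{caracterisation du H'}, which reduces everything to weighted $L^p$ estimates on the annulus; your bookkeeping there (the commutator terms in $W^{0,p}_{1}(\Omega)$, the rearrangement of $(\eta_R-1)\nabla\chi$ as a gradient plus a remainder) is sound. The paper instead runs the dual (Hahn--Banach) argument: it takes $\boldsymbol{\ell}\in(\textbf{E}^{p}(\Omega))'$ vanishing on $\mathcal{D}(\overline{\Omega})$, represents it by a pair $(\textbf{\textit{f}},\textbf{\textit{g}})\in W_0^{-1,p'}(\R^3)\times\mathring{H}_{-1}^{p'}(\mathrm{div},\Omega)$, derives $\textbf{\textit{f}}+\Delta\widetilde{\textbf{\textit{g}}}=\boldsymbol{0}$ in $\mathcal{D}'(\R^3)$, upgrades $\widetilde{\textbf{\textit{g}}}$ to $W_0^{1,p'}(\R^3)$ via the isomorphism theorem for the Laplacian in weighted spaces, concludes $\textbf{\textit{g}}\in\mathring{W}_0^{1,p'}(\Omega)$, and kills $\boldsymbol{\ell}$ by approximating $\textbf{\textit{g}}$ from $\mathcal{D}(\Omega)$. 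The trade-off: the paper's route avoids all boundary-layer technicalities but needs the global solvability result of \cite{Amrouche_JMPA_1997} and a regularity upgrade for $\textbf{\textit{g}}$; your route is more elementary and self-contained (only Proposition~\ref{caracterisation du H'} plus cutoff estimates), but Step 2 is only sketched --- in particular, the convergence of the translated and mollified Laplacians in $[\mathring{H}_{-1}^{p'}(\mathrm{div},\Omega)]'$ must again be justified by pushing the decomposition $\boldsymbol{\psi}+\nabla\chi$ through the partition of unity, the translations and the convolutions (all of which commute with $\nabla$ and act continuously on $L^p$); you invoke this device explicitly only in Step 1, so you should state it in Step 2 as well to close the argument.
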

\begin{proof}
The proof is quite similar to the proof when p=2 (see~\cite{Meslamani_2013}). Let $P$ be a continuous linear mapping from $W_{0}^{1,p}(\Omega)$ to $W_{0}^{1,p}(\R^{3})$, such that $P\,\textbf{\textit{v}} |_{\Omega}=\textbf{\textit{v}}$ and let $\boldsymbol{\ell} \in (\textbf{\textit{E}}^{\,p}(\Omega))'$, such that for any $\textbf{\textit{v}} \in \boldsymbol{\mathcal{D}}(\overline{\Omega})$, we have $\left\langle \boldsymbol{\ell},\textbf{\textit{v}}\right\rangle =0$.
We want to prove that $\boldsymbol{\ell}=\boldsymbol{0}$ on $\textbf{\textit{E}}^{\,p}(\Omega)$. Then there exists $(\textbf{\textit{f}},\textbf{\textit{g}}) \in W_{0}^{-1,p'}(\R) \times \mathring{H}_{-1}^{\,p'}(\mathrm{div},\,\Omega)$ such that : for any $\textbf{\textit{v}}\in \textbf{\textit{E}}^{\,p}(\Omega)$,
\begin{equation*}
\left\langle \boldsymbol{\ell},\textbf{\textit{v}}\right\rangle =\left\langle  \textbf{\textit{f}},P\, \textbf{\textit{v}} \right\rangle _{W_{0}^{-1,p'}(\R^{3})\times W_{0}^{1,p}(\R^{3})}+\left\langle \Delta \textbf{\textit{v}},\textbf{\textit{g}}\right\rangle _{ [\mathring{H}_{-1}^{\,p'}(\mathrm{div},\,\Omega)]'\times \mathring{H}_{-1}^{\,p'}(\mathrm{div},\,\Omega)}.
\end{equation*}
Observe that we can easily extend by zero the function  $\textbf{\textit{g}}$ in such a way that $\widetilde{\textbf{\textit{g}}} \in H_{-1}^{\,p'}(\mathrm{div},\,\R^{3})$. Now we take $\boldsymbol{\varphi} \in \boldsymbol{\mathcal{D}}(\R^{3})$. Then we have by assumption that:
\begin{equation*}
\left\langle \textbf{\textit{f}},\boldsymbol{\varphi}\right\rangle  _{W_{0}^{-1,p'}(\R^{3})\times W_{0}^{1,p}(\R^{3})} + \displaystyle \int_{\R^{3}} \widetilde{\textbf{\textit{g}}} \cdot \Delta \boldsymbol{\varphi}dx =0,
\end{equation*}
because $\left\langle \textbf{\textit{f}},\boldsymbol{\varphi}\right\rangle =\left\langle  \textbf{\textit{f}},P\, \textbf{\textit{v}} \right\rangle $ where $\textbf{\textit{v}}=\boldsymbol{\varphi}|_{\Omega}$. Thus we have $\textbf{\textit{f}}+\Delta \widetilde{\textbf{\textit{g}}}=\boldsymbol{0}$ in $\boldsymbol{\mathcal{D}}'(\R^{3})$. Then we can deduce that  $\Delta \widetilde{\textbf{\textit{g}}}=-\textbf{\textit{f}} \in W_{0}^{-1,p'}(\R^{3})$ and due to \cite[Theorem 1.3]{Amrouche_JMPA_1997}, there exists a unique $\boldsymbol{\lambda} \in W_{0}^{1,p'}(\R^{3})$ such that $\Delta \boldsymbol{\lambda}=\Delta \widetilde{\textbf{\textit{g}}}$. Thus the harmonic function $\boldsymbol{\lambda}-\widetilde{\textbf{\textit{g}}}$ belonging to $W_{-1}^{0,p'}(\R^{3})$ is necessarily equal to zero. Since $\textbf{\textit{g}} \in W_{0}^{1,p'}(\Omega)$ and  $\widetilde{\textbf{\textit{g}}} \in W_{0}^{1,p'}(\R^{3})$, we deduce that $\textbf{\textit{g}} \in \mathring{W}_{0}^{1,p'}(\Omega)$. As $\boldsymbol{\mathcal{D}}(\Omega)$ is dense in $\mathring{W}_{0}^{1,p'}(\Omega)$, there exists a sequence $\textbf{\textit{g}}_{k} \in \boldsymbol{\mathcal{D}}(\Omega)$ such that $\textbf{\textit{g}}_{k}\rightarrow\textbf{\textit{g}}$ in $W_{0}^{1,p'}(\Omega)$, when $k\rightarrow\infty$. Then $\nabla \cdot \textbf{\textit{g}}_{k}\rightarrow \nabla \cdot \textbf{\textit{g}}$ in $L^{\,p'}(\Omega)$. Since $W_{0}^{1,p'}(\Omega)$ is imbedded in $W_{-1}^{0,p'}(\Omega)$, we deduce that $\textbf{\textit{g}}_{k}\rightarrow\textbf{\textit{g}}$ in $H_{-1}^{\,p'}(\mathrm{div},\,\Omega)$.
Now, we consider $\textbf{\textit{v}} \in \textbf{\textit{E}}^{\,p}(\Omega)$ and we want to prove that $\left\langle  \boldsymbol{\ell},\textbf{\textit{v}}\right\rangle=0$.
Observe that:
\begin{equation*}
 \left\langle \boldsymbol{\ell},\textbf{\textit{v}}\right\rangle =-\left\langle \Delta \widetilde{\textbf{\textit{g}}},P \textbf{\textit{v}} \right\rangle _{W_{0}^{-1,p'}(\R^{3})\times W_{0}^{1,p}(\R^{3})} +\left\langle \Delta \textbf{\textit{v}},\textbf{\textit{g}}\right\rangle _{[\mathring{H}_{-1}^{\,p'}(\mathrm{div},\,\Omega)]'\times \mathring{H}_{-1}^{\,p'}(\mathrm{div},\,\Omega)}
\end{equation*}
\begin{equation*}
=\lim \limits_{k\rightarrow\infty}(-\displaystyle \int_{\Omega} \Delta \textbf{\textit{g}}_{k}\cdot \textbf{\textit{v}} dx+\left\langle \Delta \textbf{\textit{v}},\textbf{\textit{g}}_{k}\right\rangle _{[\mathring{H}_{-1}^{\,p'}(\mathrm{div},\,\Omega)]'\times \mathring{H}_{-1}^{\,p'}(\mathrm{div},\,\Omega)}
\end{equation*}
\begin{equation*}
=\lim \limits_{k\rightarrow\infty}(-\displaystyle \int_{\Omega} \Delta \textbf{\textit{g}}_{k}\cdot \textbf{\textit{v}} dx+\displaystyle \int_{\Omega} \textbf{\textit{v}} \cdot \Delta \textbf{\textit{g}}_{k} dx )=0.
\end{equation*}
\end{proof}
\noindent Next, we introduce the following space:
\begin{eqnarray*}
\textbf{\textit{V}}_{0,T}^{\,p}(\Omega)=\left\lbrace \textbf{\textit{z}} \in \textbf{\textit{X}}^{\,p}_{0,T}(\Omega);\,\,\mathrm{div}\,\textbf{\textit{z}}=0\,\,\mathrm{in}\,\,\,\Omega \right\rbrace,
\end{eqnarray*}
where $$\textbf{\textit{X}}^{\,p}_{0,T}(\Omega)=\lbrace \textbf{\textit{z}}\in \textbf{\textit{X}}^{\,p}_{0}(\Omega)\,\,;\,\, \textbf{\textit{z}}\cdot\textbf{\textit{n}}=0\,\,\, on\,\,\,\Gamma\rbrace.$$
\noindent As a consequence, we have the following result.
\begin{lemm}
 The linear mapping $\gamma : \textbf{\textit{v}}\rightarrow \mathbf{curl}\,\textbf{\textit{v}}\mid_{\Gamma}\times \textbf{\textit{n}}$ defined on $\mathcal{D}(\overline{\Omega})$ can be extended to a linear continuous mapping
\begin{eqnarray*}
\gamma : \textbf{E}^{p}(\Omega)\longrightarrow W^{-1/p,p}(\Gamma).
\end{eqnarray*}
Moreover, we have the Green formula: for any $\textbf{\textit{v}}\in \textbf{E}^{p}(\Omega)$ and any $\varphi \in \textbf{\textit{V}}^{p'}_{0,T}(\Omega)$,
\begin{eqnarray}\label{formule de grenn dans E^p}
\langle -\Delta\,\textbf{\textit{v}},\varphi\rangle_{\Omega}=\int_{\Omega}\mathbf{curl}\,\textbf{\textit{v}}\,.\,\mathbf{curl}\,\varphi\,d\textbf{x}+\langle\mathbf{curl}\,\textbf{\textit{v}}\times\textbf{\textit{n}},\varphi\rangle_{\Gamma}
\end{eqnarray}
Where $\left\langle.\, ,\, .\right\rangle_{\Gamma}$ denotes the duality between $W^{-1/p,p}(\Gamma)$ and $W^{1/p,p'}(\Gamma)$ and $\left\langle.\, ,\, .\right\rangle_{\Omega}$ denotes the duality between $[\mathring{H}_{-1}^{\,\,p'}(\mathrm{div},\Omega)]'$ and $\mathring{H}^{\,\,p'}_{-1}(\mathrm{div},\Omega)$.
\end{lemm}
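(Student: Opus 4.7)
The plan is to first establish the Green identity \eqref{formule de grenn dans E^p} for smooth $\mathbf{v}$ by direct integration by parts, then extend $\gamma$ to $\mathbf{E}^{p}(\Omega)$ by continuity using the density of $\mathcal{D}(\overline{\Omega})$ in $\mathbf{E}^{p}(\Omega)$ from the previous lemma.

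For $\mathbf{v}\in\mathcal{D}(\overline{\Omega})$ and smooth $\varphi\in\mathbf{V}^{p'}_{0,T}(\Omega)$, I would start from \eqref{delt1} to write $-\Delta\mathbf{v}=\mathbf{curl}\,\mathbf{curl}\,\mathbf{v}-\nabla\,\mathrm{div}\,\mathbf{v}$. Applying \eqref{FG2} to the first summand (with $\mathbf{curl}\,\mathbf{v}$ in place of $\mathbf{v}$) yields the bulk integral $\int_{\Omega}\mathbf{curl}\,\mathbf{v}\cdot\mathbf{curl}\,\varphi\,dx$ together with the boundary term involving $\mathbf{curl}\,\mathbf{v}\times\mathbf{n}$; applying \eqref{FG1} with the scalar $\mathrm{div}\,\mathbf{v}$ and the vector $\varphi$ to the second summand produces $\int_{\Omega}\mathrm{div}\,\mathbf{v}\,\mathrm{div}\,\varphi\,dx$ and $\langle\mathrm{div}\,\mathbf{v},\varphi\cdot\mathbf{n}\rangle_{\Gamma}$, both of which vanish by the defining properties of $\mathbf{V}^{p'}_{0,T}(\Omega)$. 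Next, the right-hand side of the formula is controlled by $\|\mathbf{v}\|_{\mathbf{E}^{p}(\Omega)}\|\varphi\|_{\mathbf{X}_{0}^{p'}(\Omega)}$: the duality $\langle-\Delta\mathbf{v},\varphi\rangle_{\Omega}$ is well posed because $\varphi\in L^{p'}(\Omega)\hookrightarrow W^{0,p'}_{-1}(\Omega)$ (since $\rho\ge 1$), $\mathrm{div}\,\varphi=0\in L^{p'}(\Omega)$, and $\varphi\cdot\mathbf{n}=0$ place $\varphi$ in $\mathring{H}^{p'}_{-1}(\mathrm{div},\Omega)$; while the curl integral is bounded by $\|\mathbf{curl}\,\mathbf{v}\|_{L^{p}(\Omega)}\|\mathbf{curl}\,\varphi\|_{L^{p'}(\Omega)}$, finite since $\mathbf{curl}\,\varphi\in W^{0,p'}_{1}(\Omega)\hookrightarrow L^{p'}(\Omega)$.

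To interpret $\gamma(\mathbf{v})$ as an element of $W^{-1/p,p}(\Gamma)$, I would invoke a lifting: for any tangential $\boldsymbol{\mu}\in W^{1/p,p'}(\Gamma)$ (i.e.\ $\boldsymbol{\mu}\cdot\mathbf{n}=0$), construct $\varphi\in\mathbf{V}^{p'}_{0,T}(\Omega)$, compactly supported in $\overline{\Omega}$, with $\varphi|_{\Gamma}=\boldsymbol{\mu}$ and $\|\varphi\|_{\mathbf{X}_{0}^{p'}(\Omega)}\le C\,\|\boldsymbol{\mu}\|_{W^{1/p,p'}(\Gamma)}$. Such a $\varphi$ is obtained by first taking a compactly supported lift $\tilde\varphi\in W^{1,p'}_{0}(\Omega)$ with $\tilde\varphi|_{\Gamma}=\boldsymbol{\mu}$ and then killing $\mathrm{div}\,\tilde\varphi$ by a Bogovskii solution supported in a bounded Lipschitz neighborhood of $\Gamma$; the tangential boundary condition is preserved because the corrector vanishes on $\Gamma$. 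Since $\mathbf{curl}\,\mathbf{v}\times\mathbf{n}$ is tangential, it annihilates normal boundary fields, so the estimate above yields $\|\gamma(\mathbf{v})\|_{W^{-1/p,p}(\Gamma)}\le C\|\mathbf{v}\|_{\mathbf{E}^{p}(\Omega)}$ on $\mathcal{D}(\overline{\Omega})$. Density then extends $\gamma$ uniquely to a continuous map $\mathbf{E}^{p}(\Omega)\to W^{-1/p,p}(\Gamma)$, and \eqref{formule de grenn dans E^p} passes to the limit by pairing each side with a fixed $\varphi\in\mathbf{V}^{p'}_{0,T}(\Omega)$.

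The principal obstacle is the construction of the divergence-free tangential lift $\varphi\in\mathbf{X}_{0}^{p'}(\Omega)$ with controlled norm, which requires coupling a standard trace inversion with the Bogovskii operator on a bounded neighborhood of $\Gamma$; the remaining steps are essentially a density argument combined with the functional identifications from the preceding subsections.
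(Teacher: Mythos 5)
Your proof follows essentially the same route as the paper: establish the Green identity for smooth $\textbf{\textit{v}}$ and divergence-free tangential test fields, use a divergence-free tangential lifting of $W^{1/p,p'}(\Gamma)$ data to bound $\langle\mathbf{curl}\,\textbf{\textit{v}}\times\textbf{\textit{n}},\boldsymbol{\mu}\rangle_{\Gamma}$ by $C\Vert\textbf{\textit{v}}\Vert_{\textbf{E}^{p}(\Omega)}\Vert\boldsymbol{\mu}\Vert_{W^{1/p,p'}(\Gamma)}$, and conclude by the density of $\mathcal{D}(\overline{\Omega})$ in $\textbf{E}^{p}(\Omega)$. The only difference is that you spell out details the paper merely asserts (the curl--curl integration by parts via \eqref{delt1}, \eqref{FG1}--\eqref{FG2}, and the Bogovskii construction of the compactly supported divergence-free lift), which is correct and harmless.
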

\begin{proof}

 Let $\textbf{\textit{v}} \in \boldsymbol{\mathcal{D}}(\overline{\Omega})$. Observe that if  $\boldsymbol{\varphi}\in W_{0}^{1,p'}(\Omega)$ such that $\boldsymbol{\varphi}\cdot \textbf{\textit{n}}=0$ on $\Gamma$ we deduce that\\ $\boldsymbol{\varphi}\in \textbf{\textit{X}}_{T}^{\,p'}(\Omega)$, then \eqref{formule de grenn dans E^p} holds for such $\boldsymbol{\varphi} $.\\

\noindent  For every $\boldsymbol{\mu} \in W^{\,1/p,p'}(\Gamma)$, then there exists  $\boldsymbol{\varphi}\in W_{0}^{1,p'}(\Omega)$ such that $\boldsymbol{\varphi}=\boldsymbol{\mu}_{t}$ on $\Gamma$ and that $\mathrm{div}\,\boldsymbol{\varphi}=0$ with 
\begin{equation}\label{estimation de relevement n1}
||\boldsymbol{\varphi}||_{W_{0}^{1,p'}(\Omega)}\leqslant C ||\boldsymbol{\mu}_{t}||_{W^{\,1/p,p'}(\Gamma)}\leqslant C ||\boldsymbol{\mu}||_{W^{\,1/p,p'}(\Gamma)}.
\end{equation}
As a consequence, using \eqref{formule de grenn dans E^p} we have
\begin{equation*}
|\langle \mathbf{curl}\,\textbf{\textit{v}}\times\textbf{\textit{n}},\,\boldsymbol{\mu}\rangle_{\Gamma}|\leqslant C||\textbf{\textit{v}}||_{\textbf{\textit{E}}^{\,p}(\Omega)}||\boldsymbol{\mu}||_{W^{\,1/p,p'}(\Gamma)}.
\end{equation*}
Thus,
\begin{equation*}
|| \mathbf{curl}\,\textbf{\textit{v}}\times\textbf{\textit{n}}||_{W^{\,-1/p,p}(\Gamma)}|\leqslant C||\textbf{\textit{v}}||_{\textbf{\textit{E}}^{\,p}(\Omega)}.
\end{equation*}
We deduce that the linear mapping $\gamma$ is continuous for the norm $\textbf{\textit{E}}^{\,p}(\Omega)$. Since $\boldsymbol{\mathcal{D}}(\overline{\Omega})$ is dense in $\textbf{\textit{E}}^{\,p}(\Omega)$, $\gamma$ can be extended to by continuity to $\gamma \in \mathcal{L}(\textbf{\textit{E}}^{\,p}(\Omega), W^{\,-1/p,p}(\Gamma))$ and formula \eqref{formule de grenn dans E^p} holds for all $\textbf{\textit{v}}\in\textbf{\textit{E}}^{\,p}(\Omega)$ and $\boldsymbol{\varphi}\in W_{0}^{1,p'}(\Omega)$ such that $\mathrm{div}\,\boldsymbol{\varphi}=0$ in $\Omega$ and $\boldsymbol{\varphi}\cdot \textbf{\textit{n}}=0$ on $\Gamma$.
\end{proof}
\subsection{The Laplace problem}
This section is devoted to recall the solution of the Laplace equations in $\R^3$ and we give a result concerning the Laplace problem with Neumann
boundary conditions. The result that we state below was proved by Amrouche et all in~\cite{Amrouche_1994}.
\begin{theo}\label{isomorphism p>3}
Assume that $3/p+\alpha\neq 1$ and $3/p'-\alpha\neq 1$, then for any integer $m \geqslant 1$,  the following Laplace operator is an isomorphism:
\begin{eqnarray}\label{laplce pour m}
\Delta : W^{1+m,p}_{\alpha +m}(\R^3)/P_{[1-3/p-\alpha]}^{\bigtriangleup}  & \longrightarrow & W^{-1+m,p}_{\alpha +m}(\R^3)\perp P_{[1-3/p'+\alpha]}^{\bigtriangleup}
\end{eqnarray}
\end{theo}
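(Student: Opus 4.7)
The plan is to deduce Theorem~\ref{isomorphism p>3} from its $m=0$ instance—the basic isomorphism
\begin{equation*}
\Delta:W^{1,p}_{\alpha}(\R^3)/\mathcal{P}^{\Delta}_{[1-3/p-\alpha]}\longrightarrow W^{-1,p}_{\alpha}(\R^3)\perp\mathcal{P}^{\Delta}_{[1-3/p'+\alpha]},
\end{equation*}
established in~\cite{Amrouche_1994}—and then to bootstrap regularity in $m$. A first observation is that the polynomial kernel and cokernel spaces appearing in the statement do not depend on $m$: substituting $(m,\alpha)\mapsto(1+m,\alpha+m)$ in the formula for $j$ given above yields $[1-3/p-\alpha]$, and dually for the cokernel. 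The hypotheses $3/p+\alpha\neq 1$ and $3/p'-\alpha\neq 1$ keep us away from the logarithmic borderline of Remark~\ref{remarkk} at every level of the iteration.

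Given $\textit{\textbf{f}}\in W^{-1+m,p}_{\alpha+m}(\R^3)$ orthogonal to $\mathcal{P}^{\Delta}_{[1-3/p'+\alpha]}$, I would first use the continuous imbedding $W^{-1+m,p}_{\alpha+m}(\R^3)\hookrightarrow W^{-1,p}_{\alpha}(\R^3)$, obtained by $m$ successive applications of the sliding-weight imbedding~\eqref{inclusion.sobolev2} down to $W^{0,p}_{\alpha+1}(\R^3)$ followed by a duality argument based on Theorem~\ref{Hardy} for the final step. The $m=0$ isomorphism then produces $u\in W^{1,p}_{\alpha}(\R^3)$ with $\Delta u=\textit{\textbf{f}}$, unique modulo $\mathcal{P}^{\Delta}_{[1-3/p-\alpha]}$. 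Interior regularity $u\in W^{1+m,p}_{\mathrm{loc}}(\R^3)$ is classical; the substance lies in the behaviour at infinity.

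To obtain $u\in W^{1+m,p}_{\alpha+m}(\R^3)$ I would argue by induction on $m$. Assuming the statement at level $m-1$ and differentiating $\Delta u=\textit{\textbf{f}}$ gives $\Delta(\partial^{\lambda}u)=\partial^{\lambda}\textit{\textbf{f}}$ for each $|\lambda|\leq m-1$; weighted Calder\'on--Zygmund estimates of the form
\begin{equation*}
\|\rho^{\alpha+m}\nabla^{2}w\|_{L^{p}(\R^{3})}\leq C\|\rho^{\alpha+m}\Delta w\|_{L^{p}(\R^{3})},
\end{equation*}
valid under the non-criticality assumption, then supply the missing top-order bound on $\partial^{\gamma}u$ with $|\gamma|=m+1$ in the weight $\rho^{\alpha+m}$ required by the $W^{1+m,p}_{\alpha+m}$ norm. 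The lower-order weighted bounds are inherited from the inductive hypothesis together with Theorem~\ref{Hardy}. Existence, the uniqueness modulo polynomials coming from the base case, and the continuity of $\Delta$ between the stated spaces then combine to give the claimed isomorphism.

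The main obstacle is precisely the global weighted second-derivative estimate: when $\alpha+m$ is large the power weight $\rho^{\alpha+m}$ lies outside the Muckenhoupt $A_{p}$ class, so standard $A_{p}$-weighted Calder\'on--Zygmund theory does not apply directly. The argument of~\cite{Amrouche_1994} sidesteps this by working explicitly with the Newtonian representation $u=N\star\widetilde{\textit{\textbf{f}}}$, where $\widetilde{\textit{\textbf{f}}}$ incorporates the polynomial subtraction that the cokernel orthogonality makes available, coupled with an annular decomposition tuned to the power weights $\rho^{s}$; correctly matching the subtracted polynomial with $\mathcal{P}^{\Delta}_{[1-3/p'+\alpha]}$ is the delicate technical point on which the whole construction rests.
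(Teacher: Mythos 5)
First, a point of comparison: the paper offers no proof of Theorem~\ref{isomorphism p>3} at all. It is explicitly imported from \cite{Amrouche_1994} (``the result that we state below was proved by Amrouche et al.''), so there is no in-paper argument to measure yours against; what you have written has to stand on its own. As a roadmap it is sensible: the reduction to the $m=0$ isomorphism through an embedding $W^{-1+m,p}_{\alpha+m}(\R^3)\hookrightarrow W^{-1,p}_{\alpha}(\R^3)$, the (correct) observation that the kernel and cokernel $\mathcal{P}^{\Delta}_{[1-3/p-\alpha]}$ and $\mathcal{P}^{\Delta}_{[1-3/p'+\alpha]}$ do not move with $m$, and the plan to bootstrap weighted regularity by induction are all in the spirit of how \cite{Amrouche_1994} actually proceeds.

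The gap is the inequality $\|\rho^{\alpha+m}\nabla^{2}w\|_{L^{p}(\R^{3})}\leq C\|\rho^{\alpha+m}\Delta w\|_{L^{p}(\R^{3})}$ on which your entire induction step rests. As written, without a polynomial quotient, it is false inside the theorem's own parameter range: take $p=2$, $m=1$, $\alpha=-4$ (so $3/p+\alpha=-5/2$ and $3/p'-\alpha=11/2$, both admissible) and let $w$ be a harmonic polynomial of degree $3$; then $w$ lies in $W^{2,2}_{-3}(\R^3)$, the right-hand side vanishes, and the left-hand side is finite and nonzero. The correct statement is an estimate modulo $\mathcal{P}^{\Delta}_{[1-3/p-\alpha]}$, and in that form it is essentially equivalent to the isomorphism being proved, so invoking it is circular. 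Nor is there an off-the-shelf substitute: as you note yourself, $\rho^{(\alpha+m)p}$ is in general not a Muckenhoupt weight, so weighted Calder\'on--Zygmund theory does not deliver the bound, and \cite{Amrouche_1994} instead obtains it from a direct analysis of the Newtonian potential together with the orthogonality to $\mathcal{P}^{\Delta}_{[1-3/p'+\alpha]}$. Your final paragraph accurately locates this difficulty but then defers its resolution to the reference, which makes the proposal a description of where the proof is hard rather than a proof. A smaller point: descending from $W^{-1+m,p}_{\alpha+m}(\R^3)$ to $W^{-1,p}_{\alpha}(\R^3)$ by iterating \eqref{inclusion.sobolev2} requires the non-criticality condition at every intermediate level, not only the two conditions in the statement; this is arguably a defect of the theorem as transcribed from \cite{Amrouche_1994}, but your argument silently relies on it and should flag it.
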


\noindent Now, we are interested into the following Neumann problem:
\begin{eqnarray}\label{Problem de Neumann Ax}
-\Delta\,u=f\quad\text{ in}\,\,\,\Omega\quad\text{ and }\quad
\dfrac{\partial u}{\partial n}=g\quad\text{ on}\,\,\,\Gamma.
\end{eqnarray}
  J. Giroire in~\cite{theseGiroire}, studied the problem~\eqref{Problem de Neumann Ax} in the Hilbert framework. In~\cite{Louati_Meslameni_Razafison}, the authors investigated the harmonic Neumann problem in $L^p$ theory, for the exterior domain with boundary of class $\mathcal{C}^{1,1}$, note that these results have been also proved by Specovius-Neugebauer \cite{specovius_1990} with boundary of class at leas $\mathcal{C}^2$. C. Amrouche, V. Girault and J. Giroire in~\cite{Amrouche_JMPA_1997}, studied the problem~\eqref{Problem de Neumann Ax} with data f belongs to $W^{-1,p}_{0}(\Omega)\cap L^p(\Omega)$, they got the existence of solutions in $W^{1,p}_{0}(\Omega)$. In this work, we give a result concerning the Laplace problem with Neumann
boundary conditions with data $f$ belongs to $L^{p}(\Omega)$, we got the existence of solutions in $W^{1,p}_{-1}(\Omega)$.\\

\noindent Our first proposition is established also in \cite{Louati_Meslameni_Razafison}, it characterizes the kernel of the Laplace operator with Neumann boundary condition. For any integer $k\in \Z$ and $1<p<\infty$,
\begin{equation*}
\mathcal{N}_{p,\alpha}^{\Delta}=\left\lbrace w\in W_{\alpha}^{1,p}(\Omega);\,\,\,\,\Delta\,w=0\quad\mathrm{in}\,\, \Omega\quad\mathrm{and}\quad \dfrac{\partial w}{\partial n}= 0\quad\mathrm{on}\,\, \Gamma\right\rbrace.
\end{equation*}
\begin{prop}\label{noyau de neumann}
For any integer $k\geq1$, $\mathcal{N}_{p,\alpha}^{\Delta}$ the subspace of all functions in $W_{\alpha}^{1,p}(\Omega)$ of the form $w(p)-p$, where $p$ runs over all polynomials of $\mathcal{P}_{[1-3/p-\alpha]}^{\Delta}$ and $w(p)$ is the unique solution in $W_{0}^{1,2}(\Omega)\cap W_{\alpha}^{1,p}(\Omega)$ of the Neumann problem
\begin{equation}\label{problem de Neumann liee au noyeau}
\Delta\,w(p)=0\quad\mathrm{in}\,\, \Omega\quad\mathrm{and}\quad\dfrac{\partial w(p)}{\partial  n}=\dfrac{\partial p}{\partial  n}\quad\mathrm{on}\,\, \Gamma.
\end{equation}
Here also, we set $\mathcal{N}_{p,\alpha}^{\Delta}=\left\lbrace 0\right\rbrace $ when $\alpha\leq0$; $\mathcal{N}_{p,\alpha}^{\Delta}$ is a finite-dimentional space of the same dimension as $\mathcal{P}_{[1-3/p-\alpha]}^{\Delta}$ and in particular, $\mathcal{N}_{0}^{\Delta}=\R$.
\end{prop}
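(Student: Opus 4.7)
The plan is to construct an explicit linear bijection $q\mapsto w(q)-q$ between the polynomial space $\mathcal{P}_{[1-3/p-\alpha]}^{\Delta}$ and the kernel $\mathcal{N}_{p,\alpha}^{\Delta}$, which simultaneously characterises the kernel and yields the dimension count. \textbf{Forward direction.} Fix a harmonic polynomial $q\in\mathcal{P}_{[1-3/p-\alpha]}^{\Delta}$. Its normal trace $\partial q/\partial n$ on $\Gamma$ is smooth, hence lies in $W^{1-1/p,p}(\Gamma)$. Using the $L^{2}$ theory of the exterior Neumann problem developed in~\cite{theseGiroire} together with its $L^{p}$ counterpart from~\cite{Louati_Meslameni_Razafison}, one obtains a unique $w(q)\in W_{0}^{1,2}(\Omega)\cap W_{\alpha}^{1,p}(\Omega)$ solving $\Delta w=0$ in $\Omega$ with $\partial w/\partial n=\partial q/\partial n$ on $\Gamma$; uniqueness comes from the fact that a harmonic function in $W_{0}^{1,2}(\Omega)$ with vanishing Neumann trace is identically zero. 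Since $q$ itself belongs to $W_{\alpha}^{1,p}(\Omega)$ by the very choice of its degree, the difference $w(q)-q$ is a well-defined element of $\mathcal{N}_{p,\alpha}^{\Delta}$.

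\textbf{Reverse direction.} Given $u\in\mathcal{N}_{p,\alpha}^{\Delta}$, I would extract its polynomial asymptotics at infinity via a cut-off argument. Pick $\chi\in\mathcal{C}^{\infty}(\R^{3})$ vanishing on a neighbourhood of $\overline{\Omega'}$ and equal to $1$ outside a large ball, and let $\widetilde{u}:=\chi u$ extended by zero to $\R^{3}$. Then $\widetilde{u}\in W_{\alpha}^{1,p}(\R^{3})$ and $\Delta\widetilde{u}=2\nabla\chi\cdot\nabla u+(\Delta\chi)u$ is compactly supported in $\Omega$. Inverting the Laplacian on the whole space via Theorem~\ref{isomorphism p>3}, there is a harmonic polynomial $q\in\mathcal{P}_{[1-3/p-\alpha]}^{\Delta}$ such that $\widetilde{u}+q$ sits in a weighted space with sharper decay at infinity. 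Back on $\Omega$, this forces $u+q\in W_{0}^{1,2}(\Omega)$; moreover $u+q$ is harmonic and its Neumann trace on $\Gamma$ equals $\partial q/\partial n$. The uniqueness from the forward step then gives $u+q=w(q)$, that is $u=w(q)-q$.

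\textbf{Dimensions, trivial case, and main obstacle.} The map $q\mapsto w(q)-q$ is linear, and injective because $w(q)=q$ would put the polynomial $q$ inside $W_{0}^{1,2}(\Omega)$, which in the three-dimensional exterior setting forces $q=0$. Combined with the two inclusions above this yields $\dim\mathcal{N}_{p,\alpha}^{\Delta}=\dim\mathcal{P}_{[1-3/p-\alpha]}^{\Delta}$. When $[1-3/p-\alpha]$ is a negative integer the polynomial space is trivial by the convention set in Section~2, and the kernel reduces to $\{0\}$ accordingly, which accommodates the case singled out in the statement. The hard step is clearly the reverse inclusion: the whole-space inversion must be controlled so that the polynomial it produces has degree at most $[1-3/p-\alpha]$, and the critical configurations $3/p+\alpha\in\{1,\ldots,m\}$ require incorporating the logarithmic weights from Remark~\ref{remarkk} into the weighted spaces before Theorem~\ref{isomorphism p>3} can be invoked.
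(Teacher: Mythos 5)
The paper does not actually prove this proposition: it is quoted verbatim from \cite{Louati_Meslameni_Razafison} (``Our first proposition is established also in...''), so there is no in-paper argument to measure yours against. Judged on its own, your reconstruction is the standard argument for exterior kernels and is essentially sound: solve the Neumann problem with data $\partial q/\partial n$ in the $W^{1,2}_{0}$ framework (Lax--Milgram together with the Hardy inequality of Theorem~\ref{Hardy}, uniqueness holding because $\mathcal{P}_{-1}=\{0\}$ excludes nonzero constants from $W^{1,2}_{0}(\Omega)$ in dimension three), upgrade to $W^{1,p}_{\alpha}$ by the regularity theory of \cite{Louati_Meslameni_Razafison}, and recover every kernel element by a cut-off and a whole-space inversion.

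Three remarks. First, the step you single out as the main obstacle --- controlling the degree of the polynomial produced by the inversion --- is already settled by the theorem the paper quotes from \cite{Louati_Meslameni_Razafison} on polynomials belonging to $W^{1,p}_{k}(\Omega)+W^{1,q}_{k'}(\Omega)$: the harmonic tempered distribution $\widetilde{u}-v$ lies in $W^{1,p}_{\alpha}(\R^{3})+W^{1,2}_{0}(\R^{3})$, hence is a polynomial of degree at most $\max\big([1-3/p-\alpha],-1\big)$, which is exactly the bound you need; so this is not a gap, merely an unused ingredient. Second, to conclude that $u+q\in W^{1,2}_{0}(\Omega)$ you also need the compactly supported piece $(1-\chi)u$ to lie in $W^{1,2}$ up to $\Gamma$; this is immediate for $p\ge 2$ but for $p<2$ requires a local regularity step ($u$ is harmonic with vanishing Neumann data on a $\mathcal{C}^{1,1}$ boundary, so $u\in W^{2,p}_{loc}(\overline{\Omega})$, and one bootstraps via the Sobolev embedding); likewise, solving $\Delta v=\Delta\widetilde{u}$ with $v$ decaying is cleanest if you observe that $\Delta\widetilde{u}$ is supported in the interior of $\Omega$, where $u$ is smooth, so $v$ can be taken to be the Newtonian potential. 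Third, the clause ``$\mathcal{N}_{p,\alpha}^{\Delta}=\{0\}$ when $\alpha\le 0$'' in the statement is inconsistent with ``$\mathcal{N}_{0}^{\Delta}=\R$'' and is evidently a misprint for the condition $[1-3/p-\alpha]<0$; your reading is the correct one.
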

\noindent The next theorem states an existence, uniqueness and regularity result for problem~\eqref{Problem de Neumann Ax}.
\begin{theo}\label{theorem Neumann}
Let $1<p<\infty$ be any real number and assume that $\Gamma$ is of class $\mathcal{C}^{1,1}$ if $p\neq 2$.
For any $f$ in $L^{p}(\Omega)$ and $g$ in $W^{-1/p,p}(\Gamma)$. Then, the problem~\eqref{Problem de Neumann Ax} has a solution $u\in W^{1,p}_{-1}(\Omega)$  unique up to element of $\mathcal{N}^{\Delta}_{p,-1}(\Omega)$ and we have the following estimate: 
\begin{eqnarray}\label{estimation faible neumann}
\Vert u\Vert_{W^{1,p}_{-1}(\Omega)/ \mathcal{N}^{\Delta}_{p,-1}(\Omega)} \leqslant C\big(\Vert f\Vert_{L^{p}(\Omega)}+\Vert g\Vert_{W^{-1/p,p}(\Omega)}\big)
\end{eqnarray}
If in addition,$f\in W^{1,p}_{1}(\Omega)$ and $g$ in $W^{1/p',p}(\Gamma)$, the solution $u$ of problem~\eqref{Problem de Neumann Ax} belongs to $W^{2,p}_{0}(\Omega)$ and satisfies 
\begin{eqnarray}\label{estimation forte neumann}
\Vert u\Vert_{W^{2,p}_{0}(\Omega)} \leqslant C\big(\Vert f\Vert_{L^{p}(\Omega)}+\Vert g\Vert_{W^{1/p',p}(\Omega)}\big)
\end{eqnarray}
\end{theo}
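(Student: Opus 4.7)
My plan is to reduce the exterior problem \eqref{Problem de Neumann Ax} to a Poisson problem posed on all of $\mathbb{R}^{3}$, for which Theorem \ref{isomorphism p>3} provides an explicit isomorphism, and then to absorb the mismatch on $\Gamma$ by solving a pure Neumann problem (with $f=0$) in $\Omega$; this latter problem was already treated in $W^{1,p}_{0}(\Omega)$ by Amrouche--Girault--Giroire \cite{Amrouche_JMPA_1997}. The elementary pointwise bound $\rho^{-2}\le\rho^{-1}\le 1$ on $\overline{\Omega}$ gives the continuous embedding $W^{1,p}_{0}(\Omega)\hookrightarrow W^{1,p}_{-1}(\Omega)$, which guarantees that a correction provided by the earlier theory automatically lands in the target space $W^{1,p}_{-1}(\Omega)$. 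This is the structural observation that makes the scheme close up.

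Concretely I would proceed in three steps. \emph{(i) Poisson in $\mathbb{R}^{3}$:} extend $f$ by zero to $\widetilde f\in L^{p}(\mathbb{R}^{3})$, and (after orthogonalising $\widetilde f$ against $\mathcal{P}^{\Delta}_{[1-3/p']}$ if that space is non-trivial) apply Theorem \ref{isomorphism p>3} with $m=1$, $\alpha=0$ to produce $w\in W^{2,p}_{1}(\mathbb{R}^{3})$ with $-\Delta w=\widetilde f$ in $\mathbb{R}^{3}$, unique modulo $\mathcal{P}^{\Delta}_{[1-3/p]}$. The chain $W^{2,p}_{1}(\mathbb{R}^{3})\hookrightarrow W^{1,p}_{0}(\mathbb{R}^{3})\hookrightarrow W^{1,p}_{-1}(\mathbb{R}^{3})$ gives $w|_{\Omega}\in W^{1,p}_{-1}(\Omega)$, and local $W^{2,p}$-regularity near $\Gamma$ ensures $(\partial w/\partial n)_{|\Gamma}\in W^{-1/p,p}(\Gamma)$. \emph{(ii) Harmonic correction:} I would look for $h\in W^{1,p}_{0}(\Omega)$ with
$$-\Delta h=0\ \mathrm{in}\ \Omega,\qquad \frac{\partial h}{\partial n}=g-\frac{\partial w}{\partial n}\ \mathrm{on}\ \Gamma,$$
which is legitimate in the framework of \cite{Amrouche_JMPA_1997} since the source $0$ lies in $W^{-1,p}_{0}(\Omega)\cap L^{p}(\Omega)$ and the boundary datum lies in $W^{-1/p,p}(\Gamma)$; then $h$ is unique modulo $\mathcal{N}^{\Delta}_{p,0}(\Omega)$. \emph{(iii) Synthesis:} setting $u:=w|_{\Omega}+h$ yields an element of $W^{1,p}_{-1}(\Omega)$ (by the embedding above) that solves \eqref{Problem de Neumann Ax}. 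Uniqueness modulo $\mathcal{N}^{\Delta}_{p,-1}(\Omega)$ is exactly Proposition \ref{noyau de neumann}, and the estimate \eqref{estimation faible neumann} follows by chaining the continuity constants of Theorem \ref{isomorphism p>3}, the trace bound on $\partial w/\partial n$ and the continuous dependence of $h$ on its boundary data, then passing to the quotient via the open mapping theorem. For the regularity statement \eqref{estimation forte neumann}, I would repeat the same scheme with $m=2$ in Theorem \ref{isomorphism p>3}, obtaining $w\in W^{3,p}_{2}(\mathbb{R}^{3})\hookrightarrow W^{2,p}_{0}(\mathbb{R}^{3})$, and invoke the matching higher-regularity version of the homogeneous Neumann corrector.

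The main technical obstacle I anticipate is to honour the finite-dimensional compatibility constraints imposed by Theorem \ref{isomorphism p>3}: when $1<p<3/2$ one must enforce $\widetilde f\perp\mathcal{P}^{\Delta}_{[1-3/p']}=\mathbb{R}$, i.e.\ $\int_{\Omega}f\,dx=0$; if this fails one has to subtract from $\widetilde f$ a well-chosen compactly supported modification whose Laplacian represents the defect, and then re-inject the associated Neumann contribution into step (ii). Complementarily, the exponents $p\in\{3/2,3\}$ where the assumptions $3/p+\alpha\neq 1$ and $3/p'-\alpha\neq 1$ of Theorem \ref{isomorphism p>3} break down have to be handled via the logarithmic-weight variant of Remark \ref{remarkk}; this does not alter the conceptual structure of the argument but adds non-trivial bookkeeping in the identification of polynomial kernels and of the admissible orthogonality classes.
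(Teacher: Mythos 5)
Your overall architecture --- extend $f$ by zero, solve a Poisson problem in $\R^3$, then absorb the Neumann mismatch by an exterior harmonic corrector --- is exactly the paper's, but your choice of weight indices in step (i) breaks the argument. Applying Theorem~\ref{isomorphism p>3} with $m=1$, $\alpha=0$ requires the datum to lie in $W^{0,p}_{1}(\R^3)=\{v:\rho\,v\in L^p(\R^3)\}$, which is a \emph{strictly smaller} space than $L^p(\R^3)$: the zero extension $\widetilde f$ of a general $f\in L^{p}(\Omega)$ has no reason to satisfy $\rho\,\widetilde f\in L^{p}(\R^3)$, so the isomorphism you invoke simply does not apply to your data. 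The paper instead takes $m=1$, $\alpha=-1$, so that the operator reads $\Delta: W^{2,p}_{0}(\R^3)/\mathcal{P}^{\Delta}_{[2-3/p]}\longrightarrow L^{p}(\R^3)$ with trivial polar space $\mathcal{P}^{\Delta}_{[-3/p']}=\{0\}$; this matches the hypothesis $f\in L^{p}(\Omega)$ exactly and makes the compatibility condition $\int_{\Omega}f\,dx=0$ for $p<3/2$ --- and your vague ``compactly supported modification'' designed to enforce it --- entirely unnecessary.

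A second, related defect is in step (ii). You place the harmonic corrector $h$ in $W^{1,p}_{0}(\Omega)$ using \cite{Amrouche_JMPA_1997}; but the exterior harmonic Neumann problem in $W^{1,p}_{0}(\Omega)$ is itself subject to an orthogonality condition on part of the range of $p$ (for $p<3/2$ the constants belong to $W^{1,p'}_{0}(\Omega)$, and Green's formula forces $\langle g-\partial w/\partial n,1\rangle_{\Gamma}=0$), and it yields uniqueness modulo $\mathcal{N}^{\Delta}_{p,0}$ rather than the kernel $\mathcal{N}^{\Delta}_{p,-1}$ asserted in the statement. The paper avoids both problems by solving the harmonic Neumann problem directly in the larger space $W^{1,p}_{-1}(\Omega)$ via \cite[Theorem 3.12]{Louati_Meslameni_Razafison}, which requires no compatibility condition and produces exactly the kernel $\mathcal{N}^{\Delta}_{p,-1}$. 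Finally, your plan for the $W^{2,p}_{0}$ regularity (rerun the scheme with $m=2$) inherits the same index problem, since $\widetilde f$ would then have to lie in $W^{1,p}_{2}(\R^3)$; the paper instead proves regularity by a cut-off argument, splitting $u$ with a partition of unity and combining whole-space regularity away from the obstacle with local elliptic regularity for a mixed problem on the bounded set $\Omega_{R+1}$.
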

\begin{proof}
Let us extend $f$ by zero in $\Omega'$ and let $\widetilde{f}$ denote the extended function. Then $\widetilde{f}$ belongs to $L^{p}(\R^3)$. Using Theorem~\ref{isomorphism p>3},  there exists a unique function $\widetilde{v} \in W^{2,p}_{0}(\R^3)/\mathcal{P}^{\Delta}_{[2-3/p]}$ such that
\begin{eqnarray}\label{laplace in R^3}
-\Delta\,\widetilde{v}=\widetilde{f}\quad\text{ in }\,\,\R^3.
\end{eqnarray}
Then $\nabla\,\widetilde{v}\cdot n$ belongs to $W^{1-1/p,p}(\Gamma)\hookrightarrow W^{-1/p,p}(\Gamma)$. It follows from \cite[Theorem 3.12]{Louati_Meslameni_Razafison}, that the following problem:
\begin{eqnarray}\label{Neumann Harmonic}
\Delta\,w=0\quad\text{ in }\,\,\,\Omega\quad\text{ and } \quad\nabla\,w\cdot n=g-\nabla\,\widetilde{v}\cdot n\quad\text{ on }\,\,\,\Gamma,
\end{eqnarray}
has a solution $w\in W^{1,p}_{-1}(\Omega)$ unique up to element of $\mathcal{N}^{\Delta}_{p,-1}(\Omega)$. Thus $u=\widetilde{v}_{\mid_{\Omega}}
+w\in W^{1,p}_{-1}(\Omega)$ is the required solution of~\eqref{Problem de Neumann Ax}. The uniqueness follows immediately from Proposition~\ref{noyau de neumann} .\\

\noindent Now, suppose that $g$ belongs to $W^{1/p',p}(\Gamma).$ The aim is to  prove that $u$ belongs to $W^{2,p}_{0}(\Omega)$. To that end, let us introduce the following partition of unity:
\begin{equation*}
 \label{partition de l'unite}
 \begin{split}
&\varphi,\,\psi\in\mathcal{C}^\infty(\R^3), \quad 0\le\varphi,\,\psi\le1,\quad \varphi+\psi=1\quad\text{in}\quad\R^3,\\ 
&\varphi=1\quad\text{in}\quad B_R,\quad\text{supp }\varphi\subset B_{R+1}.
\end{split}
\end{equation*}
Let $P$ be a continuous linear mapping from $W^{1,p}_{-1}(\Omega)$ to $W^{1,p}_{-1}(\R^3)$, such that $P\,u=\widetilde{u}$. Then $\widetilde{u}$ belongs to $W^{1,p}_{-1}(\R^3)$ and can be written as:
\begin{eqnarray*}
\widetilde{u}=\varphi\,{\widetilde{u}}+\psi\,\widetilde{u}.
\end{eqnarray*}
Next, one can easily observe that $\widetilde{u}$ satisfies the following problem:
\begin{eqnarray}\label{laplace R^3}
-\Delta\,\psi\,\widetilde{u}=f_{1}\quad\text{ in }\,\,\,\R^3,
\end{eqnarray}
with
\begin{eqnarray*}
f_{1}=\widetilde{f}{\psi}
-(2\nabla{\widetilde{u}}\nabla{\psi}+\widetilde{u}\Delta\,{\psi}).
\end{eqnarray*}
Owing to the support of $\psi$, $f_{1}$ has the same regularity as $f$ and so belongs to $L^{p}(\R^3)$. It follows from Theorem~\ref{isomorphism p>3}, that there exists $z$ in $W^{2,p}_{0}(\R^3)$ such that $-\Delta\,z=f_{1}$ in $\R^3.$ This implies that $\psi\,\widetilde{f}-z$ is a harmonic tempered distribution and therefore a harmonic polynomial that belongs to $\mathcal{P}^{\Delta}_{[2-3/p]}$. The fact that $\mathcal{P}^{\Delta}_{[2-3/p]} \subset  W^{2,p}_{0}(\R^3)$ yields that ${\psi\,\widetilde{u}}$ belongs to $W^{2,p}_{0}(\R^3)$. In particular, we have ${\psi\,\widetilde{u}}=u$ outside $B_{R+1}$, so the restriction of $u$ to $\partial\, B_{R+1}$ belongs to $W^{2-1/p,p}(\partial\,B_{R+1})$. Therefore, $\varphi\,u\in W^{1,p}(\Omega_{R+1})$ satisfies the following problem:
\begin{eqnarray}\label{mixte laplace}
\begin{cases}
-\Delta\,\varphi\,u=f_{2}\quad\text{ in }\,\,\, \Omega_{R+1},\\
\nabla\,\varphi\,u\cdot n=g\quad\text{ on }\,\,\,\Gamma,\\
\varphi\,u={\psi\,\widetilde{u}}\quad\text{ on }\,\,\,\partial\, B_{R+1},
\end{cases}
\end{eqnarray}
where $f_{2}\in L^{p}(\Omega_{R+1})$ have similar expression as  $f_{1}$ with $\psi$ remplaced by $\varphi$. According [Remark 3.2, \cite{Amrouche_JMPA_1997}], $\varphi\,u\in W^{2,p}(\Omega_{R+1})$, which in turn shows that $\varphi\,\widetilde{u}$ also belongs to $W^{2,p}(\Omega_{R+1})$. This implies that $u\in W^{2,p}_{0}(\Omega).$\\
\end{proof}
\section{Potential vector and inf-sup condition}
\noindent In this section, we recall a result concerning the existence a vector potential, and we give the proof of the Inf-sup condition, which plays a crucial role in the existence and uniqueness of solutions. Let us recall the abstract setting of Babu$\check{s}$ka-Brezzi's Theorem (see  Babu$\check{s}$ka $\cite{Babuska}$ and Brezzi $\cite{Brezzi}$).
\begin{theo}\label{Babuchca brezi}
 Let $X$ and $M$ be two reflexive Banach spaces and $X'$ and $M'$ their dual spaces. Let $a$ be the continuous bilinear form defined on $X\times M$, let $A\in \mathcal{L}(X;\ M')$ and  $A'\in \mathcal{L}(M;\ X')$ be the operators defined by
$$
\forall v\in X,\ \forall w\in M, \ a(v, w) =\ <Av, w>\ =\ <v, A'w>
$$
and $V=\mathrm{Ker}\,A$. 
The following statements are equivalent:\medskip

\noindent i) There exist $\beta > 0$ such that
\begin{equation}\label{CISabstraite}
\inf_{\substack{w\in M\\ w\not=0}}\,\sup_{\substack{v\in X\\ v\not=0}}\dfrac{a(v, w)}{\Vert v\Vert_X\,\Vert w\Vert_M}\geq\beta.
\end{equation}\medskip
\noindent ii) The operator $A:\ X/V\mapsto M'$ is an isomophism and $1/\beta$ is the continuity constant of $A^{-1}$.\medskip

\noindent iii)  The operator $A':\ M\mapsto X'\bot \,V$ is an isomophism and $1/\beta$ is the continuity constant of $(A')^{-1}$.

\end{theo}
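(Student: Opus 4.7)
The plan is to reduce the inf--sup condition to a coercivity-type lower bound on the adjoint operator $A'$ and then invoke the Banach closed-range theorem in the reflexive setting. The key observation is that by the very definition of the dual norm,
$$\sup_{v\in X\setminus\{0\}}\,\frac{a(v,w)}{\|v\|_X}\,=\,\|A'w\|_{X'},$$
so the inf--sup condition (i) is exactly equivalent to the estimate
$$\|A'w\|_{X'}\,\geq\,\beta\,\|w\|_M\quad\text{for every } w\in M. \qquad(\star)$$
This single inequality is the backbone of the whole argument.

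Next I would show that $(\star)$ is equivalent to $A'$ being injective with closed range and $\|(A')^{-1}\|\leq 1/\beta$ on its range. Injectivity and the norm bound are immediate consequences of $(\star)$; the closedness of $\mathrm{Range}(A')$ follows from a Cauchy sequence argument, since if $(A'w_n)$ converges in $X'$ then $(\star)$ forces $(w_n)$ to be Cauchy in $M$, and continuity of $A'$ identifies the limit as $A'w$. The converse direction uses the open mapping theorem applied to $A':M\to\mathrm{Range}(A')$.

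The heart of the proof is then the Banach closed-range theorem, which in the reflexive setting provides the dual identifications $\mathrm{Range}(A')=V^{\perp}$ and $\mathrm{Range}(A)=(\ker A')^{\perp}$, and tells us that $A$ has closed range if and only if $A'$ does. From $(\star)$ we get $\ker A'=\{0\}$, hence $\mathrm{Range}(A)=M'$; that is, $A:X\to M'$ is surjective, so the induced map $\bar A:X/V\to M'$ is a continuous bijection and hence an isomorphism by the open mapping theorem---this is (ii). Simultaneously $\mathrm{Range}(A')=V^{\perp}=X'\perp V$, giving (iii). The norm identity $\|\bar A^{-1}\|=\|(A')^{-1}\|=1/\beta$ follows from the canonical duality $(X/V)'\cong V^{\perp}$ and is the source of the equality of the continuity constants.

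The main obstacle is the careful bookkeeping around the closed-range theorem and the duality $(X/V)'\cong V^{\perp}$: both identifications are standard but genuinely require the reflexivity hypothesis on $X$ and $M$, and must be invoked explicitly in order to conclude that the actual inverse operators (not merely one-sided estimates) are bounded with the same constant $1/\beta$. Once this is in place, the three equalities $1/\beta=\|(A')^{-1}\|=\|\bar A^{-1}\|$ and the full cycle of implications (i)$\Leftrightarrow$(ii)$\Leftrightarrow$(iii) fall out from the symmetry of the situation.
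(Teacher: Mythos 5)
The paper does not prove this theorem at all: it is recalled as the classical Babu\v{s}ka--Brezzi result with references to \cite{Babuska} and \cite{Brezzi}, so there is no in-paper argument to compare against. Your sketch is the standard proof of that classical result and is essentially correct: the identification $\sup_{v\neq 0} a(v,w)/\Vert v\Vert_X=\Vert A'w\Vert_{X'}$ reduces (i) to the lower bound $(\star)$, the Cauchy-sequence argument gives injectivity and closed range of $A'$, and the closed range theorem together with the reflexivity of $M$ (needed to view $A'$ as the Banach adjoint of $A$ on $M''=M$) and the duality $(X/V)'\cong V^{\perp}$ yields (ii), (iii) and the equality of the constants. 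The only points worth tightening in a full write-up are the explicit verification that the signed supremum equals the dual norm (replace $v$ by $-v$), and the converse implications (ii)$\Rightarrow$(i), (iii)$\Rightarrow$(i), which you dispatch by ``symmetry'' but which are one-line consequences of $\Vert (A')^{-1}\Vert\leq 1/\beta$ giving back $(\star)$.
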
\medskip

\begin{rema}\label{remark sur c inf sup}{\rm

As consequence, if the Inf-Sup condition (\ref{CISabstraite}) is satisfied, then we have the following properties:\medskip

\noindent i) If $V = \{0\}$, then for any $f\in X'$, there exists a unique $ w\in M$ such that
\begin{equation}\label{inegalite util ds inf-sup}
\forall v\in X,\  a(v, w) = \ <f, v>\quad \mathrm{and}\quad \Vert w\Vert_{M} \leq \dfrac{1}{\beta} \Vert f\Vert_{X'}.
\end{equation}

\noindent ii) If $V \not= \{0\}$, then for any $f\in X',\ $ satisfying the compatibility condition: \medskip

$\forall v\in V,\ <f, v>\ = 0$, there exists a unique $ w\in M$ such that \eqref{inegalite util ds inf-sup}.\medskip

\noindent iii) For any $g\in M',\ \exists v\in X$, unique up an additive element of $V$, such that: 
$$
\forall w\in M,\  a(v, w) = \ <g, w> \quad \mathrm{and}\quad \Vert v\Vert_{X/V} \leq \dfrac{1}{\beta} \Vert g\Vert_{M'}.
$$
}
\end{rema}
\noindent Now, we recall a result concerning the potential vector provided in~\cite{Louati_Meslameni_Razafison}.
\begin{theo}\label{Theorem Vecteur potentiel}
Assume that $1<p<3/2$. Let $\textbf{\textit{z}}$ in $H^{p}_{0}(\mathrm{div},\Omega)$  satisfying
\begin{center}
$\mathrm{div}\,\textbf{\textit{z}}=0$ \text{ in } $\Omega$\quad\text{ and }\quad $\langle\textbf{\textit{z}}\cdot\textbf{\textit{n}},1\rangle_{\Gamma }=0$.
\end{center}
 Then there exists a unique vector potential $\boldsymbol{\psi}$ in $W^{1,p}_{0}(\Omega)$ such that:
\begin{eqnarray}\label{vecteur potentiel p}
\textbf{\textit{z}}=\textbf{curl}\,\boldsymbol{\psi}\text{,} \quad\mathrm{div}\,\boldsymbol{\psi}=0\quad \text{ in } \Omega\quad\text{ and }\quad
\boldsymbol{\psi}\cdot\textbf{\textit{n}}=0\quad\text{ on }\,\,\Gamma.
\end{eqnarray}
In addition  we have the following estimate:
\begin{eqnarray}\label{curl inégalité}
\Vert\boldsymbol{\psi}\Vert_{W^{1,p}_{0}(\Omega)}\leqslant C\Vert\textbf{\textit{z}}\Vert_{L^p(\Omega)}.
\end{eqnarray}
\end{theo}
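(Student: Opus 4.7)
The strategy is to build $\boldsymbol{\psi}$ in two stages: first construct a vector potential for a suitable divergence-free extension of $\textbf{\textit{z}}$ to all of $\R^3$, and then correct it by a gradient to enforce the slip condition $\boldsymbol{\psi}\cdot\textbf{\textit{n}}=0$ on $\Gamma$.

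\textbf{Step 1: divergence-free extension to $\R^3$.} I would first extend $\textbf{\textit{z}}$ to $\widetilde{\textbf{\textit{z}}}$ on $\R^3$ preserving the divergence-free property. Inside the bounded complement $\Omega'$, solve the interior Neumann problem $\Delta\theta=0$ in $\Omega'$, $\partial\theta/\partial\textbf{\textit{n}}=\textbf{\textit{z}}\cdot\textbf{\textit{n}}$ on $\Gamma$; the required solvability condition $\langle\textbf{\textit{z}}\cdot\textbf{\textit{n}},1\rangle_\Gamma=0$ is exactly the hypothesis. Setting $\widetilde{\textbf{\textit{z}}}=\textbf{\textit{z}}$ on $\Omega$ and $\widetilde{\textbf{\textit{z}}}=\nabla\theta$ on $\Omega'$, the matching of normal traces across $\Gamma$ gives $\mathrm{div}\,\widetilde{\textbf{\textit{z}}}=0$ in $\mathcal{D}'(\R^3)$, and elliptic regularity on $\Omega'$ yields $\|\widetilde{\textbf{\textit{z}}}\|_{L^p(\R^3)} \leq C\|\textbf{\textit{z}}\|_{L^p(\Omega)}$.

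\textbf{Step 2: potential on $\R^3$.} Next, solve $-\Delta\mathbf{u}=\widetilde{\textbf{\textit{z}}}$ in $\R^3$ by applying Theorem~\ref{isomorphism p>3} with $m=1$ and $\alpha=-1$. For $1<p<3/2$ the polynomial spaces $\mathcal{P}^\Delta_{[2-3/p]}$ and $\mathcal{P}^\Delta_{[-3/p']}$ are both trivial, so one obtains a unique $\mathbf{u}\in W^{2,p}_0(\R^3)$ with $\|\mathbf{u}\|_{W^{2,p}_0(\R^3)}\leq C\|\widetilde{\textbf{\textit{z}}}\|_{L^p(\R^3)}$. Set $\widetilde{\boldsymbol{\psi}}:=\mathbf{curl}\,\mathbf{u}\in W^{1,p}_0(\R^3)$; then $\mathrm{div}\,\widetilde{\boldsymbol{\psi}}=0$ automatically, while the identity $\mathbf{curl}\,\mathbf{curl}\,\mathbf{u}=\nabla\,\mathrm{div}\,\mathbf{u}-\Delta\mathbf{u}$ gives $\mathbf{curl}\,\widetilde{\boldsymbol{\psi}}=\widetilde{\textbf{\textit{z}}}+\nabla(\mathrm{div}\,\mathbf{u})$. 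Since $\mathrm{div}\,\mathbf{u}\in W^{1,p}_0(\R^3)$ and $\Delta(\mathrm{div}\,\mathbf{u})=\mathrm{div}\,\widetilde{\textbf{\textit{z}}}=0$, it is a harmonic tempered distribution, hence a polynomial belonging to $\mathcal{P}_{[1-3/p]}$, which reduces to $\{0\}$ for $p<3/2$. Therefore $\mathbf{curl}\,\widetilde{\boldsymbol{\psi}}=\widetilde{\textbf{\textit{z}}}$ throughout $\R^3$.

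\textbf{Step 3: boundary correction and conclusion.} The restriction $\widetilde{\boldsymbol{\psi}}|_\Omega$ has the right curl and divergence but not the required normal trace. I then solve the Neumann problem $\Delta\chi=0$ in $\Omega$, $\partial\chi/\partial\textbf{\textit{n}}=-\widetilde{\boldsymbol{\psi}}\cdot\textbf{\textit{n}}$ on $\Gamma$. The trace theorem for $W^{1,p}_0(\Omega)$ gives $\widetilde{\boldsymbol{\psi}}\cdot\textbf{\textit{n}}\in W^{1/p',p}(\Gamma)$, so the strong-regularity part of Theorem~\ref{theorem Neumann} produces $\chi\in W^{2,p}_0(\Omega)$, and hence $\nabla\chi\in W^{1,p}_0(\Omega)$. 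Then $\boldsymbol{\psi}:=\widetilde{\boldsymbol{\psi}}|_\Omega+\nabla\chi\in W^{1,p}_0(\Omega)$ solves~\eqref{vecteur potentiel p}, and concatenating the continuous estimates of the three steps yields~\eqref{curl inégalité}. For uniqueness, the difference of two solutions is curl-free, divergence-free, with zero normal trace; being curl-free in the simply-connected exterior $\Omega$, it equals $\nabla q$ for a harmonic $q$ satisfying $\partial q/\partial\textbf{\textit{n}}=0$, and modulo constants $q\in W^{2,p}_0(\Omega)\hookrightarrow W^{1,p}_{-1}(\Omega)$, so $q\in\mathcal{N}^\Delta_{p,-1}(\Omega)$; but by Proposition~\ref{noyau de neumann} this space is parametrized by $\mathcal{P}^\Delta_{[2-3/p]}=\{0\}$ in the present range, forcing $\nabla q=0$.

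\textbf{Main obstacle.} The most delicate point is the bookkeeping of the exceptional polynomial subspaces attached to each use of the Laplace isomorphism, in particular the Liouville argument forcing $\mathrm{div}\,\mathbf{u}\equiv 0$ and the analogous reduction used for uniqueness; all three reductions rely crucially on the restriction $1<p<3/2$, which ensures that every polynomial corrector that could arise is in fact trivial.
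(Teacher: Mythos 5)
The paper does not actually prove this theorem: it is recalled verbatim from \cite{Louati_Meslameni_Razafison}, so there is no internal proof to compare yours against. Taken on its own terms, your construction is the standard one for exterior domains and is sound: the divergence-free extension by an interior harmonic gradient (the hypothesis $\langle \textbf{\textit{z}}\cdot\textbf{\textit{n}},1\rangle_{\Gamma}=0$ being exactly the compatibility condition of the interior Neumann problem), the whole-space solve via Theorem~\ref{isomorphism p>3} with $m=1$, $\alpha=-1$ (both exceptional polynomial spaces are trivial precisely because $1<p<3/2$, and the Liouville step killing $\mathrm{div}\,\mathbf{u}$ works since $\mathcal{P}_{[1-3/p]}=\{0\}$), and the tangential correction through the regularity part of Theorem~\ref{theorem Neumann}. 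Note that Theorem~\ref{theorem Neumann} is proved in the paper without appeal to the present statement, so no circularity is introduced, and the norm estimate chains correctly through the trace inequality \eqref{3.theorem de trace de H div} and the continuity constants of each solve.

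The one point that does not go through as written is the uniqueness step. You invoke ``the simply-connected exterior $\Omega$'', but neither the theorem nor the paper's standing assumptions ($\Omega'$ bounded, connected, with $\mathcal{C}^{1,1}$ boundary) guarantee that $\Omega=\R^{3}\setminus\overline{\Omega'}$ is simply connected: take $\Omega'$ a solid torus. In that case the space of fields with vanishing curl, divergence and normal trace is a nontrivial finite-dimensional space of Neumann harmonic fields, and these do belong to $W^{1,p}_{0}(\Omega)$ (they decay like $|x|^{-2}$, with gradient like $|x|^{-3}$), so the passage from $\mathbf{curl}\,\textbf{\textit{w}}=\boldsymbol{0}$ to $\textbf{\textit{w}}=\nabla q$ fails and uniqueness is genuinely lost. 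The statement as recalled therefore carries an implicit topological hypothesis inherited from \cite{Louati_Meslameni_Razafison}; your argument is complete once that hypothesis is made explicit.
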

\noindent We introduce the following space:
\begin{eqnarray*}
\textbf{\textit{Z}}^{\,p}_{T}(\Omega)=\Big\lbrace \textbf{\textit{v}}\in W^{0,p}_{-1}(\Omega):\,\mathrm{div}\,\textbf{\textit{v}}\in L^{p}(\Omega),\,\,\mathbf{curl}\,\textbf{\textit{v}}\in L^{p}(\Omega)\,\, ,\,\,\textbf{\textit{v}}\cdot\textbf{\textit{n}}=0\,\,\text{ on }\,\,\Gamma\Big\rbrace.
\end{eqnarray*} 
\begin{theo}\label{injection in X_T  homogene}
Let $1<p<3/2$. There exists a constant $C >0$ such that for all $\boldsymbol{\varphi} \in \textbf{\textit{Z}}^{\,p}_{T}(\Omega),$ we have
\begin{eqnarray}\label{equivalence de norme avec curl X_T p}
||\boldsymbol{\varphi}||_{W_{0}^{1,p}(\Omega)}&\leq& C \Big(||\mathrm{div}\,\boldsymbol{\varphi}||_{L_{}^{p}(\Omega)}+||\mathbf{curl}\,\boldsymbol{\varphi} ||_{L_{}^{p}(\Omega)}\Big).
\end{eqnarray}
\end{theo}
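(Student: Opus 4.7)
The strategy is a local-global argument combined with a compactness-plus-kernel trick. Far from $\Gamma$ I pass to $\R^3$ via a cutoff and invoke the isomorphism properties of the Laplacian; near $\Gamma$ I use classical elliptic regularity for the div-curl system on a bounded piece of $\Omega$; and I finally eliminate a residual $L^p$-term by contradiction, the decisive input being the vanishing of $\mathcal{P}^{\Delta}_{[1-3/p]}$ when $p<3/2$.

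Fix $R_{0}$ with $\overline{\Omega'}\subset B_{R_{0}}$ and pick $\xi\in\mathcal{C}^\infty(\R^3)$ with $\xi\equiv 0$ on $B_{R_{0}}$ and $\xi\equiv 1$ outside $B_{R_{0}+1}$. Setting $\widetilde{\boldsymbol{\varphi}}:=\xi\boldsymbol{\varphi}$, extended by zero into $\Omega'$ (legitimate since $\xi$ vanishes near $\Gamma$), one has $\widetilde{\boldsymbol{\varphi}}\in W^{0,p}_{-1}(\R^3)$. The identities
\begin{equation*}
\mathrm{div}\,\widetilde{\boldsymbol{\varphi}}=\xi\,\mathrm{div}\,\boldsymbol{\varphi}+\nabla\xi\cdot\boldsymbol{\varphi},\qquad \mathbf{curl}\,\widetilde{\boldsymbol{\varphi}}=\xi\,\mathbf{curl}\,\boldsymbol{\varphi}+\nabla\xi\times\boldsymbol{\varphi}
\end{equation*}
place both quantities in $L^p(\R^3)$: the leading terms by hypothesis, the commutator pieces because $\nabla\xi$ is compactly supported in an annulus where $\rho$ is bounded, so $\boldsymbol{\varphi}$ is locally $L^p$ thanks to $\boldsymbol{\varphi}\in W^{0,p}_{-1}(\Omega)$. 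Using $-\Delta=-\nabla\,\mathrm{div}+\mathbf{curl}\,\mathbf{curl}$, the distribution $-\Delta\widetilde{\boldsymbol{\varphi}}$ lies in $W^{-1,p}_0(\R^3)$ with norm controlled by $\|\mathrm{div}\,\boldsymbol{\varphi}\|_{L^p(\Omega)}+\|\mathbf{curl}\,\boldsymbol{\varphi}\|_{L^p(\Omega)}+\|\boldsymbol{\varphi}\|_{L^p(\Omega_{R_{0}+1})}$.

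For $1<p<3/2$ the Laplacian realises an isomorphism $-\Delta:W^{1,p}_0(\R^3)\to W^{-1,p}_0(\R^3)\perp\R^3$ (no source quotient since $[1-3/p]<0$, and the compatibility $\langle -\Delta\widetilde{\boldsymbol{\varphi}},\mathbf{c}\rangle=0$ for constant $\mathbf{c}$ is automatic). This yields $\mathbf{w}\in W^{1,p}_0(\R^3)$ with $-\Delta\mathbf{w}=-\Delta\widetilde{\boldsymbol{\varphi}}$ and a matching estimate. The difference $\widetilde{\boldsymbol{\varphi}}-\mathbf{w}\in W^{0,p}_{-1}(\R^3)$ is a harmonic tempered distribution, hence a polynomial; but the decay encoded in $W^{0,p}_{-1}(\R^3)$ forces its degree to be strictly less than $1-3/p<0$, so $\widetilde{\boldsymbol{\varphi}}=\mathbf{w}\in W^{1,p}_0(\R^3)$. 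In particular $\boldsymbol{\varphi}\in W^{1,p}(\Omega\setminus\overline{B_{R_{0}+1}})$, and combined with classical elliptic regularity for the div-curl system on the bounded domain $\Omega_{R_{0}+2}$ (with the Navier condition $\boldsymbol{\varphi}\cdot\textbf{\textit{n}}=0$ on $\Gamma$ and the trace on $\partial B_{R_{0}+2}$ coming from the exterior step) this gives $\boldsymbol{\varphi}\in W^{1,p}_0(\Omega)$ with
\begin{equation*}
\|\boldsymbol{\varphi}\|_{W^{1,p}_0(\Omega)}\leq C\bigl(\|\mathrm{div}\,\boldsymbol{\varphi}\|_{L^p(\Omega)}+\|\mathbf{curl}\,\boldsymbol{\varphi}\|_{L^p(\Omega)}+\|\boldsymbol{\varphi}\|_{L^p(\Omega_{R_{0}+1})}\bigr).
\end{equation*}

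The final task, and the main obstacle, is to absorb the $L^p(\Omega_{R_{0}+1})$-term on the right. Arguing by contradiction, a sequence $\boldsymbol{\varphi}_n\in\textit{\textbf{Z}}^{\,p}_{T}(\Omega)$ violating the desired bound may be normalized so that $\|\boldsymbol{\varphi}_n\|_{W^{1,p}_0(\Omega)}=1$ while $\|\mathrm{div}\,\boldsymbol{\varphi}_n\|_{L^p}+\|\mathbf{curl}\,\boldsymbol{\varphi}_n\|_{L^p}\to 0$; the inequality above then forces $\|\boldsymbol{\varphi}_n\|_{L^p(\Omega_{R_{0}+1})}$ to remain bounded below. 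Rellich--Kondrachov on the bounded set $\Omega_{R_{0}+1}$ extracts a subsequence converging strongly in $L^p(\Omega_{R_{0}+1})$ and weakly in $W^{1,p}_0(\Omega)$ to a limit $\boldsymbol{\varphi}^*$ satisfying $\mathrm{div}\,\boldsymbol{\varphi}^*=0$, $\mathbf{curl}\,\boldsymbol{\varphi}^*=0$ in $\Omega$, and $\boldsymbol{\varphi}^*\cdot\textbf{\textit{n}}=0$ on $\Gamma$. Writing $\boldsymbol{\varphi}^*=\nabla\chi$, this $\chi$ is harmonic in $\Omega$ with homogeneous Neumann datum and $\nabla\chi\in L^p(\Omega)$, so $\chi$ belongs to $\mathcal{N}^{\Delta}_{p,0}(\Omega)$. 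By Proposition \ref{noyau de neumann}, this kernel has dimension $\dim\mathcal{P}^{\Delta}_{[1-3/p]}=0$ under $1<p<3/2$, so $\boldsymbol{\varphi}^*=0$, contradicting the lower bound. This kernel-triviality step, which hinges precisely on $p<3/2$, is the delicate ingredient of the argument.
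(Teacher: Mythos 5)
Your argument is genuinely different from the paper's. The paper's proof is short and constructive: it quotes the embedding $\textbf{\textit{Z}}^{\,p}_{T}(\Omega)\hookrightarrow W^{1,p}_{0}(\Omega)$ from~\cite{Louati_Meslameni_Razafison}, checks that $\langle\mathbf{curl}\,\boldsymbol{\varphi}\cdot\textbf{\textit{n}},1\rangle_{\Gamma}=0$ (this is where $p<3/2$ enters, since the constants must belong to $W^{1,p'}_{0}(\Omega)$), produces the vector potential $\boldsymbol{\psi}$ of $\mathbf{curl}\,\boldsymbol{\varphi}$ via Theorem~\ref{Theorem Vecteur potentiel} with $\Vert\boldsymbol{\psi}\Vert_{W^{1,p}_{0}(\Omega)}\leq C\Vert\mathbf{curl}\,\boldsymbol{\varphi}\Vert_{L^{p}(\Omega)}$, solves the Neumann problem $\Delta w=\mathrm{div}\,\boldsymbol{\varphi}$, $\partial w/\partial\textbf{\textit{n}}=0$ via Theorem~\ref{theorem Neumann}, and identifies $\boldsymbol{\varphi}-\nabla w$ with $\boldsymbol{\psi}$ by the uniqueness in Theorem~\ref{Theorem Vecteur potentiel}; the estimate then drops out with an explicit constant. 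You instead re-derive the $W^{1,p}_{0}$-regularity by a cut-off/whole-space/bounded-piece decomposition and remove the residual local $L^{p}$ term by a compactness (Peetre--Tartar) argument. That buys a more self-contained treatment of the regularity and avoids the flux/vector-potential machinery, at the price of importing the bounded-domain div-curl estimate as an external input and of losing any explicit constant. Your localization, the Rellich extraction, and the observation that the kernel dies because $\mathcal{P}^{\Delta}_{[1-3/p]}=\{0\}$ are all sound (note this last point only needs $p<3$, not $p<3/2$; the restriction to $p<3/2$ is forced elsewhere).

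The one step you cannot take for granted is ``Writing $\boldsymbol{\varphi}^{*}=\nabla\chi$''. The limit field satisfies $\mathbf{curl}\,\boldsymbol{\varphi}^{*}=\boldsymbol{0}$ in $\Omega$, but an exterior domain $\Omega=\R^{3}\setminus\overline{\Omega'}$ need not be simply connected (take $\Omega'$ a solid torus), and then a curl-free field need not be a gradient; the obstruction is precisely the finite-dimensional space of harmonic fields $\{\textbf{\textit{v}}:\ \mathrm{div}\,\textbf{\textit{v}}=0,\ \mathbf{curl}\,\textbf{\textit{v}}=\boldsymbol{0},\ \textbf{\textit{v}}\cdot\textbf{\textit{n}}=0\ \text{on}\ \Gamma\}$, which is exactly the set your contradiction argument is trying to prove trivial --- so as written the argument is circular at this point. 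The clean repair, using only the paper's toolkit, is to drop $\chi$ altogether: $\boldsymbol{\varphi}^{*}$ lies in $W^{1,p}_{0}(\Omega)$, is divergence-free, curl-free and tangential, so both $\boldsymbol{\varphi}^{*}$ and $\boldsymbol{0}$ are admissible vector potentials of $\textbf{\textit{z}}=\boldsymbol{0}$ in the sense of Theorem~\ref{Theorem Vecteur potentiel}, and the uniqueness assertion there gives $\boldsymbol{\varphi}^{*}=\boldsymbol{0}$ directly (this is the same uniqueness the paper itself invokes, and it is where the topological hypothesis is ultimately buried). With that substitution your proof closes.
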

\begin{proof}
Let $\boldsymbol{\varphi}$ in $\textbf{\textit{Z}}^{\,p}_{T}(\Omega)$. It follows from [\cite{Louati_Meslameni_Razafison}, Theorem 5.1], that $\textbf{\textit{Z}}^{\,p}_{T}(\Omega)$ is continuously imbedded in $W^{1,p}_{0}(\Omega)$ and so $\boldsymbol{\varphi}$ belongs to $W^{1,p}_{0}(\Omega)$. Observe that for any $\chi \in W_{0}^{1,p'}(\Omega)$, the following Green formula holds:
\begin{eqnarray}\label{fG valide pour 1}
\langle\mathbf{curl}\,\varphi\cdot\textbf{\textit{n}},\chi \rangle_{\Gamma}=\displaystyle \int_{\Omega} \mathbf{curl}\,\varphi\cdot \nabla\,\chi\, d\textbf{x}.
\end{eqnarray}
Because $1<p<3/2$, the constants belong to $W_{0}^{1,p'}(\Omega)$, then \eqref{fG valide pour 1} is still valid for $\chi=1$ and thus we have:
$$\langle\mathbf{curl}\,\varphi\cdot\textbf{\textit{n}},1\rangle_{\Gamma}=0.$$
Using Theorem \ref{Theorem Vecteur potentiel}, we prove that there exists a unique vector potential $\boldsymbol{\psi} \in W_{0}^{1,p}(\Omega)$ such that:
\begin{eqnarray*}
\mathbf{curl}\,\boldsymbol{\varphi}=\textbf{curl}\,\boldsymbol{\psi}\text{,} \quad\mathrm{div}\,\boldsymbol{\psi}=0\quad \text{ in } \Omega\quad\text{ and }\quad
\boldsymbol{\psi}\cdot\textbf{\textit{n}}=0\quad\text{ on }\,\,\Gamma.
\end{eqnarray*}
In addition, we have
\begin{eqnarray*}
\Vert\boldsymbol{\psi}\Vert_{W^{1,p}_{0}(\Omega)}\leqslant C\Vert \mathbf{curl}\,\boldsymbol{\varphi}\Vert_{L^p(\Omega)}.
\end{eqnarray*}
On the other hand, let us solve the following problem,
\begin{eqnarray}\label{Neuman problem}
\Delta\,\w=\mathrm{div}\,\boldsymbol{\varphi}\,\,\text{ in }\,\,\Omega\quad\text{ and }\,\,\dfrac{\partial\w}{\partial\textbf{\textit{n}}}=0\,\,\text{ on }\,\,\Gamma.
\end{eqnarray}
Thanks to Theorem \ref{theorem Neumann}, problem \eqref{Neuman problem} has a solution $\w \in W^{2,p}_{0}(\Omega)$ that satisfies
\begin{eqnarray}\label{inégalité divergence}
\Vert\nabla\,\w\Vert_{W^{1,p}_{0}(\Omega)}\leqslant C\Vert\mathrm{div}\,\boldsymbol{\varphi}\Vert_{L^{p}(\Omega)}
\end{eqnarray}
Setting $\z=\boldsymbol{\varphi}-\nabla\,\w$. It is clear that $\z$ belongs to $W^{1,p}_{0}(\Omega)$, it is divergence-free,  $\z\cdot\textbf{\textit{n}}=0$ on $\Gamma$ and $\mathbf{curl}\,\varphi=\mathbf{curl}\,\z$ in $\Omega$. The uniqueness of $\boldsymbol{\psi}$ implies that $\z=\boldsymbol{\psi}$ and thus we obtain
\begin{eqnarray}\label{inégalité curl}
\Vert\z\Vert_{W^{1,p}_{0}(\Omega)}\leqslant C\Vert \mathbf{curl}\,\boldsymbol{\varphi}\Vert_{L^p(\Omega)}.
\end{eqnarray}
Finally, the inequality \eqref{equivalence de norme avec curl X_T p} follows immediately from \eqref{inégalité curl} and \eqref{inégalité divergence}.
\end{proof}

\begin{rema}\label{equiavlence des normes}
As a consequence of Theorem \ref{injection in X_T  homogene}, the seminorm in the right-hand side of \eqref{equivalence de norme avec curl X_T p} is a norm on $\textbf{\textit{Z}}^{\,p}_{T}(\Omega)$ equivalent to the norm $||\boldsymbol{\varphi}||_{W_{0}^{1,p}(\Omega)}$.
\end{rema}

\noindent The "inf-sup" condition is given by the following lemma:
\begin{lemm}\label{lemme condition inf sup 2}
Assume that $p>3$. The following  Inf-Sup Condition holds: there exists a constant $\beta>0$, such that
\begin{equation}\label{condition inf sup 2}
 \inf_{\substack{\boldsymbol{\varphi}\in\textbf{\textit{V}}^{\,p'}_{0,T}(\Omega)\\\boldsymbol{\varphi}\neq 0}}\sup_{\substack{\boldsymbol{\psi}\in\textbf{\textit{V}}^{\,p}_{0,T}(\Omega)\\\boldsymbol{\psi}\neq 0}}\dfrac{\int_{\Omega}\mathbf{curl}\,\boldsymbol{\psi}\cdot\mathbf{curl}\,\boldsymbol{\varphi}\,\mathrm{d}\textbf{\textit{x}}}{\Vert \boldsymbol{\psi}\Vert_{\textbf{\textit{Z}}^{\,p}_{T}(\Omega)}\Vert \boldsymbol{\varphi}\Vert_{\textbf{\textit{Z}}^{\,p'}_{T}(\Omega)}}\geq\beta.
\end{equation}
\end{lemm}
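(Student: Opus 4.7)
The plan is to establish \eqref{condition inf sup 2} by a direct construction: for each $\boldsymbol{\varphi}\in\textbf{\textit{V}}^{\,p'}_{0,T}(\Omega)\setminus\{\boldsymbol{0}\}$ I will exhibit a witness $\boldsymbol{\psi}\in\textbf{\textit{V}}^{\,p}_{0,T}(\Omega)$ delivering the required uniform lower bound for the ratio. A first reduction is immediate: since $p>3$ implies $p'\in(1,3/2)$, Theorem~\ref{injection in X_T  homogene} together with Remark~\ref{equiavlence des normes} applies to $\boldsymbol{\varphi}$, and combined with $\mathrm{div}\,\boldsymbol{\varphi}=0$ yields
\[
\|\boldsymbol{\varphi}\|_{\textbf{\textit{Z}}^{\,p'}_T(\Omega)}\leq C\,\|\mathbf{curl}\,\boldsymbol{\varphi}\|_{L^{p'}(\Omega)}.
\]
Thus it suffices to produce $\boldsymbol{\psi}\in\textbf{\textit{V}}^{\,p}_{0,T}(\Omega)$ satisfying $\int_\Omega \mathbf{curl}\,\boldsymbol{\psi}\cdot\mathbf{curl}\,\boldsymbol{\varphi}\,\mathrm{d}\textbf{\textit{x}}\geq c\,\|\boldsymbol{\psi}\|_{\textbf{\textit{Z}}^{\,p}_T(\Omega)}\|\mathbf{curl}\,\boldsymbol{\varphi}\|_{L^{p'}(\Omega)}$ for a constant $c>0$ independent of $\boldsymbol{\varphi}$.

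The witness $\boldsymbol{\psi}$ will be constructed by duality together with a Helmholtz-type decomposition. By $L^p$--$L^{p'}$ duality applied to $\mathbf{curl}\,\boldsymbol{\varphi}\in L^{p'}(\Omega)$, I pick $\textbf{\textit{h}}\in L^p(\Omega)$ with $\|\textbf{\textit{h}}\|_{L^p}\leq 1$ and $\int_\Omega\textbf{\textit{h}}\cdot\mathbf{curl}\,\boldsymbol{\varphi}\,\mathrm{d}\textbf{\textit{x}}\geq\tfrac12\|\mathbf{curl}\,\boldsymbol{\varphi}\|_{L^{p'}(\Omega)}$. I then solve a weighted Laplace--Dirichlet problem $\Delta q=\mathrm{div}\,\textbf{\textit{h}}$ in $\Omega$ with $q=0$ on $\Gamma$ (by techniques analogous to those used for Theorem~\ref{theorem Neumann}) and set $\textbf{\textit{h}}_0=\textbf{\textit{h}}-\nabla q$. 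This delivers $\mathrm{div}\,\textbf{\textit{h}}_0=0$ in $\Omega$, $\langle\textbf{\textit{h}}_0\cdot\textbf{\textit{n}},1\rangle_\Gamma=0$, and $\|\textbf{\textit{h}}_0\|_{L^p}\leq C$. The pairing with $\mathbf{curl}\,\boldsymbol{\varphi}$ is preserved because $q|_\Gamma=0$ forces $\nabla q\times\textbf{\textit{n}}=\boldsymbol{0}$ on $\Gamma$, so the Green formula~\eqref{FG2} together with $\mathbf{curl}\,\nabla q=\boldsymbol{0}$ gives $\int_\Omega\nabla q\cdot\mathbf{curl}\,\boldsymbol{\varphi}\,\mathrm{d}\textbf{\textit{x}}=0$, and hence $\int_\Omega \textbf{\textit{h}}_0\cdot\mathbf{curl}\,\boldsymbol{\varphi}\,\mathrm{d}\textbf{\textit{x}}\geq\tfrac12\|\mathbf{curl}\,\boldsymbol{\varphi}\|_{L^{p'}(\Omega)}$.

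The remaining and most delicate step is to realize $\textbf{\textit{h}}_0$ as $\mathbf{curl}\,\boldsymbol{\psi}$ for some $\boldsymbol{\psi}\in\textbf{\textit{V}}^{\,p}_{0,T}(\Omega)$ with $\|\boldsymbol{\psi}\|_{\textbf{\textit{Z}}^{\,p}_T(\Omega)}\leq C\|\textbf{\textit{h}}_0\|_{L^p(\Omega)}$. Theorem~\ref{Theorem Vecteur potentiel} supplies such a vector potential only in the range $1<p<3/2$, so it does not apply directly in the present regime. To bypass this obstruction I plan to exploit the Babu\v{s}ka--Brezzi symmetry embodied in Theorem~\ref{Babuchca brezi}: the twin inf-sup condition obtained by swapping $p$ and $p'$ falls in the range $p'<3/2$ and can be proved directly from Theorems~\ref{Theorem Vecteur potentiel} and~\ref{injection in X_T  homogene}; part~(iii) of Theorem~\ref{Babuchca brezi} then furnishes an isomorphism between the relevant trial and test spaces whose inverse, composed with the Helmholtz projection above, delivers the required $\boldsymbol{\psi}$. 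This reconciliation of the $p>3$ functional setting with the vector potential machinery naturally suited to $p<3/2$ is the main obstacle of the proof. Once completed, assembling the estimates gives $\int_\Omega \mathbf{curl}\,\boldsymbol{\psi}\cdot\mathbf{curl}\,\boldsymbol{\varphi}\,\mathrm{d}\textbf{\textit{x}}\geq\tfrac12\|\mathbf{curl}\,\boldsymbol{\varphi}\|_{L^{p'}(\Omega)}\geq\beta\|\boldsymbol{\psi}\|_{\textbf{\textit{Z}}^{\,p}_T(\Omega)}\|\boldsymbol{\varphi}\|_{\textbf{\textit{Z}}^{\,p'}_T(\Omega)}$ with $\beta>0$ independent of $\boldsymbol{\varphi}$, which is precisely~\eqref{condition inf sup 2}.
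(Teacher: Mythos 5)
Your overall strategy --- dualize $\Vert\mathbf{curl}\,\boldsymbol{\varphi}\Vert_{L^{p'}(\Omega)}$ against $L^{p}(\Omega)$, strip off a gradient by solving a Dirichlet problem, and realize the divergence-free remainder as $\mathbf{curl}\,\boldsymbol{\psi}$ with $\boldsymbol{\psi}\in\textbf{\textit{V}}^{\,p}_{0,T}(\Omega)$ --- is exactly the paper's. But there are two genuine gaps. The first is your claim that $\mathrm{div}\,\textbf{\textit{h}}_0=0$ ``delivers'' $\langle\textbf{\textit{h}}_0\cdot\textbf{\textit{n}},1\rangle_\Gamma=0$. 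In an exterior domain this is false: the constant $1$ does not belong to $W^{1,p'}_{0}(\Omega)$ when $p'<3/2$, so you cannot take $\varphi=1$ in \eqref{FG1}, and indeed $\nabla\lambda$, where $\lambda$ is the decaying harmonic function equal to $1$ on $\Gamma$, is a divergence-free field of $L^{p}(\Omega)$ whose flux through $\Gamma$ is a strictly positive constant $C_1$. Without the zero-flux condition the vector potential theorem is not applicable. The paper repairs precisely this point by replacing $\textbf{\textit{z}}$ with $\widetilde{\textbf{\textit{z}}}=\textbf{\textit{z}}-C_1^{-1}\langle\textbf{\textit{z}}\cdot\textbf{\textit{n}},1\rangle_\Gamma\,\nabla\lambda$ and then checking that this correction leaves the pairing unchanged, because $\int_\Omega\mathbf{curl}\,\boldsymbol{\varphi}\cdot\nabla\lambda\,\mathrm{d}\textbf{\textit{x}}=\langle\mathbf{curl}\,\boldsymbol{\varphi}\cdot\textbf{\textit{n}},\lambda\rangle_\Gamma=\langle\mathbf{curl}\,\boldsymbol{\varphi}\cdot\textbf{\textit{n}},1\rangle_\Gamma=0$, the last equality being obtained by extending $\boldsymbol{\varphi}$ into $\Omega'$. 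This step must be added to your construction.

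The second gap is the proposed detour around the restriction $1<p<3/2$ in Theorem~\ref{Theorem Vecteur potentiel}. Establishing the ``twin'' inf-sup condition (infimum over $\textbf{\textit{V}}^{\,p}_{0,T}(\Omega)$, supremum over $\textbf{\textit{V}}^{\,p'}_{0,T}(\Omega)$) only yields, via Theorem~\ref{Babuchca brezi}, that the associated operator is injective with closed range; to pass to \eqref{condition inf sup 2} you additionally need non-degeneracy of the form on the $\boldsymbol{\varphi}$ side, and proving that is exactly the statement you are trying to avoid, namely that every divergence-free, zero-flux field of $L^{p}(\Omega)$ is $\mathbf{curl}\,\boldsymbol{\psi}$ for some $\boldsymbol{\psi}\in\textbf{\textit{V}}^{\,p}_{0,T}(\Omega)$ with the accompanying estimate. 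Moreover the twin inf-sup would itself require the bound $\Vert\boldsymbol{\psi}\Vert_{\textbf{\textit{Z}}^{\,p}_{T}(\Omega)}\leq C\Vert\mathbf{curl}\,\boldsymbol{\psi}\Vert_{L^{p}(\Omega)}$ of Theorem~\ref{injection in X_T  homogene} for the out-of-range exponent $p>3$, so the swap does not remove the difficulty. The paper simply applies the vector potential construction to $\widetilde{\textbf{\textit{z}}}\in L^{p}(\Omega)$ with $p>3$, relying on the general weighted $L^{p}$ results of the cited reference rather than on the restricted statement reproduced as Theorem~\ref{Theorem Vecteur potentiel}; if you want a self-contained argument you need that existence result in the range $p>3$, not a duality trick.
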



\begin{proof}
Let $\textbf{\textit{g}}\in\textbf{\textit{L}}_{}^{p}(\Omega)$ and let us introduce the following Dirichlet problem: 
\begin{equation*}
 -\Delta\,\chi=\mathrm{div}\,\textbf{\textit{g}}\quad\mathrm{in}\,\, \Omega,\quad\chi=0\quad\mathrm{on}\,\, \Gamma. 
\end{equation*}
It is shown in Theorem $3.7$ of \cite{Louati_Meslameni_Razafison}, that this problem has a solution $\chi \in \mathring{W}_{0}^{1,p}(\Omega) $ and we have
\begin{equation*}
 \Vert\nabla\,\chi \Vert_{\textbf{\textit{L}}_{}^{p}(\Omega)}\leq C\Vert\textbf{\textit{g}}\Vert_{\textbf{\textit{L}}_{}^{p}(\Omega)}.
\end{equation*}
Set $\textbf{\textit{z}}= \textbf{\textit{g}}-\nabla\,\chi$. Then we have $\textbf{\textit{z}} \in \textbf{\textit{L}}_{}^{p}(\Omega)$, $\mathrm{div}\,\textbf{\textit{z}}=0$ and we have 
\begin{equation} \label{inegalite decomposition de helmholt 2}
 \Vert\textbf{\textit{z}} \Vert_{\textbf{\textit{L}}_{}^{p}(\Omega)}\leq C\Vert\textbf{\textit{g}}\Vert_{\textbf{\textit{L}}_{}^{p}(\Omega)}.
\end{equation}
Let $\boldsymbol{\varphi}$ any function of $\textbf{\textit{V}}^{\,p'}_{0,T}(\Omega)$, by Theorem \ref{injection in X_T  homogene} we have $\boldsymbol{\varphi} \in \textbf{\textit{Z}}^{\,p'}_{T}(\Omega) \hookrightarrow W_{0}^{1,p'}(\Omega)$. Then due to \eqref{equivalence de norme avec curl X_T p} we can write
\begin{equation}\label{dualite 2}
 \Vert\boldsymbol{\varphi}\Vert_{W^{\,1,p'}_{0}(\Omega)}\leq C\Vert\mathbf{curl}\,\boldsymbol{\varphi}\Vert_{\textbf{\textit{L}}_{}^{p'}(\Omega)}= C\,\sup_{ \substack{\textbf{\textit{g}}\in\textbf{\textit{L}}_{}^{p}(\Omega)\\\textbf{\textit{g}}\neq 0}}\dfrac{\big{\vert}\int_{\Omega}\mathbf{curl}\,\boldsymbol{\varphi}\cdot\textbf{\textit{g}}\,\mathrm{d}\textbf{\textit{x}}\big{\vert}}{\Vert \textbf{\textit{g}}\Vert_{\textbf{\textit{L}}_{}^{p}(\Omega)}}.
\end{equation}
Using the fact that $\mathbf{curl}\,\boldsymbol{\varphi} \in H_{0}^{\,p'}(\mathrm{div},\Omega)$ and applying \eqref{FG1}, we obtain
\begin{equation}\label{calcul integ zero 2}
 \int_{\Omega}\mathbf{curl}\,\boldsymbol{\varphi}\cdot\nabla\,\chi\,\mathrm{d}\textbf{\textit{x}}=0.
\end{equation}
Now, let $\lambda \in W_{1}^{2,p}(\Omega)\cap W_{1}^{2,2}(\Omega)$ the solution of the following problem:
\begin{equation*}
\Delta\,\lambda=0\quad\mathrm{in}\,\, \Omega\quad\mathrm{and}\quad \lambda=1\quad\mathrm{on}\,\, \Gamma.
\end{equation*}
It follows from Lemma $3.11$ of $\cite{Vivet_1994}$ that
 $$\Big\langle \dfrac{\partial \lambda}{\partial \textbf{\textit{n}}},1 \Big\rangle_{\Gamma}=\int_{\Gamma}\dfrac{\partial \lambda}{\partial \textbf{\textit{n}}} d\sigma= C_1>0.$$ Now, setting
\begin{equation*}
\widetilde{\textbf{\textit{z}}}=\textbf{\textit{z}}-\dfrac{1}{C_{1}}\left\langle \textbf{\textit{z}}\cdot\textbf{\textit{n}},1 \right\rangle _{\Gamma}\nabla\,\lambda.
\end{equation*}
It is clear that $\widetilde{\textbf{\textit{z}}}\in \textbf{\textit{L}}_{}^{p}(\Omega)$, $\mathrm{div}\,\widetilde{\textbf{\textit{z}}}=0$ in $\Omega$ and that $\left\langle \widetilde{\textbf{\textit{z}}}\cdot\textbf{\textit{n}},1 \right\rangle _{\Gamma}=0$. Due to Theorem \ref{Theorem Vecteur potentiel}, there exists a potential vector $\boldsymbol{\psi}\in W_{0}^{1,p}(\Omega)$ such that 
\begin{equation}\label{conséquence vecteur potentiel 2}
 \widetilde{\textbf{\textit{z}}}=\mathbf{curl}\,\boldsymbol{\psi},\quad\mathrm{div}\,\boldsymbol{\psi}=0\quad\mathrm{in}\,\Omega\quad\mathrm{and}\quad\boldsymbol{\psi}\cdot\textbf{\textit{n}}=0\quad\mathrm{on}\,\Gamma.
\end{equation}
In addition, we have the estimate
\begin{equation}\label{relation de norme entre z tilde et z}
|| \boldsymbol{\psi}||_{W_{0}^{1,p}(\Omega)} \leq C ||\widetilde{\textbf{\textit{z}}}||_{ \textbf{\textit{L}}_{}^{p}(\Omega)} \leq C  ||\textbf{\textit{z}}||_{ \textbf{\textit{L}}_{}^{p}(\Omega)}.
\end{equation}
Then, we obtain that $\boldsymbol{\psi}$ belongs to $\textbf{\textit{V}}^{\,p}_{0,T}(\Omega)$. Since $\boldsymbol{\varphi}$ is $W^{1,p'}_{loc}$ in a neighborhood of $\Gamma$, then $\boldsymbol{\varphi}$ has an $W^{1,p'}_{loc}$ extension in $\Omega'$ denoted by $\widetilde{\boldsymbol{\varphi}}$. Applying Green's formula in $\Omega'$, we obtain
\begin{equation*}
 0=\int_{\Omega'}\mathrm{div}(\mathbf{curl}\,\widetilde{\boldsymbol{\varphi}})\,\mathrm{d}\textbf{\textit{x}}=\left\langle \mathbf{curl}\,\widetilde{\boldsymbol{\varphi}}\cdot\textbf{\textit{n}},1\right\rangle _{\Gamma}=\left\langle \mathbf{curl}\,\boldsymbol{\varphi}\cdot \textbf{\textit{n}},1\right\rangle _{\Gamma}.
\end{equation*} 
Using the fact that $\mathbf{curl}\,\boldsymbol{\varphi}$ in $H_{0}^{\,p'}(\mathrm{div},\Omega)$ and $\lambda$ in $W_{1}^{2,p}(\Omega)\hookrightarrow W_{0}^{1,p}(\Omega)$ and applying \eqref{FG1}, we obtain
\begin{equation}\label{calcul integrale de nabla lambda nul}
0=\left\langle \mathbf{curl}\,\boldsymbol{\varphi}\cdot \textbf{\textit{n}},1\right\rangle _{\Gamma}=\left\langle \mathbf{curl}\,\boldsymbol{\varphi}\cdot \textbf{\textit{n}},\lambda\right\rangle _{\Gamma}=\int_{\Omega}\mathbf{curl}\,\boldsymbol{\varphi}\cdot\nabla\,\lambda\,\mathrm{d}\textbf{\textit{x}}.
\end{equation}
Using  \eqref{calcul integ zero 2} and \eqref{calcul integrale de nabla lambda nul}, we deduce that 

\begin{equation}\label{egalite des integrale total}
\int_{\Omega}\mathbf{curl}\,\boldsymbol{\varphi}\cdot\textbf{\textit{g}}\,\mathrm{d}\textbf{\textit{x}}=\int_{\Omega}\mathbf{curl}\,\boldsymbol{\varphi}\cdot\textbf{\textit{z}}\,\mathrm{d}\textbf{\textit{x}}=\int_{\Omega}\mathbf{curl}\,\boldsymbol{\varphi}\cdot\widetilde{\textbf{\textit{z}}}\,\mathrm{d}\textbf{\textit{x}}.
\end{equation}
From  \eqref{inegalite decomposition de helmholt 2}, \eqref{relation de norme entre z tilde et z} and \eqref{egalite des integrale total}, we deduce that 
\begin{equation*}
\dfrac{\big{\vert}\int_{\Omega}\mathbf{curl}\,\boldsymbol{\varphi}\cdot\textbf{\textit{g}}\,\mathrm{d}\textbf{\textit{x}}\big{\vert}}{\Vert \textbf{\textit{g}}\Vert_{\textbf{\textit{L}}_{}^{p}(\Omega)}}\leq C\dfrac{\big{\vert}\int_{\Omega}\mathbf{curl}\,\boldsymbol{\varphi}\cdot\widetilde{\textbf{\textit{z}}}\,\mathrm{d}\textbf{\textit{x}}\big{\vert}}{\Vert \widetilde{\textbf{\textit{z}}}\Vert_{\textbf{\textit{L}}_{}^{p}(\Omega)}} = C\dfrac{\big{\vert}\int_{\Omega}\mathbf{curl}\,\boldsymbol{\varphi}\cdot\mathbf{curl}\,\boldsymbol{\psi}\,\mathrm{d}\textbf{\textit{x}}\big{\vert}}{\Vert \mathbf{curl}\,\boldsymbol{\psi}\Vert_{\textbf{\textit{L}}_{}^{p}(\Omega)}}.
\end{equation*}
Applying again \eqref{equivalence de norme avec curl X_T p}, we obtain
\begin{equation*}
 \dfrac{\big{\vert}\int_{\Omega}\mathbf{curl}\,\boldsymbol{\varphi}\cdot\textbf{\textit{g}}\,\mathrm{d}\textbf{\textit{x}}\big{\vert}}{\Vert \textbf{\textit{g}}\Vert_{W_{}^{p}(\Omega)}}
\leq  C\dfrac{\big{\vert}\int_{\Omega}\mathbf{curl}\,\boldsymbol{\varphi}\cdot\mathbf{curl}\,\boldsymbol{\psi}\,\mathrm{d}\textbf{\textit{x}}\big{\vert}}{\Vert \boldsymbol{\psi}\Vert_{W_{0}^{\,1,p}(\Omega)}},
\end{equation*}
 and the Inf-Sup Condition \eqref{condition inf sup 2} follows immediately from \eqref{dualite 2}.
\end{proof}\medskip

\section{ Existence of weak solution for $p>3$}
We prove in this sequel the existence and the uniqueness of weak solutions for the following Problem:
\begin{eqnarray*}\label{problem Curl}
(\mathcal{S}_T)\begin{cases}
-\Delta\,\textbf{\textit{u}}+\nabla\,{\pi}=\textbf{\textit{f}}\quad and\quad
\mathrm{div}\,\textbf{\textit{u}}={\chi}\quad\text{ in }\,\Omega,\\
\quad\textbf{\textit{u}}\cdot \textbf{\textit{n}}=g\,\,\,\,and\quad
\mathbf{curl}\,\textbf{\textit{u}}\times\textbf{\textit{n}}=\textbf{\textit{h}}\quad\text{ on }\,\Gamma.
\end{cases}
\end{eqnarray*}
The following result concerns the existence and uniqueness of the weak solution of the problem $(\mathcal{S}_T)$ where $\chi=g=0$.
\begin{theo}\label{solution chi=0}
Assume that $p>3$, suppose that $\chi=g=0$. Then, for any $\textbf{\textit{f}}\in[\mathring{H}_{-1}^{\,\,p'}(\mathrm{div},\Omega)]'$ and $\textbf{\textit{h}}\in W^{-1/p,p}(\Gamma)$, the problem $(\mathcal{S}_T)$ has a unique solution $(\textbf{\textit{u}},{\pi})\in(W^{1,p}_{0}(\Omega)\times {{L}}^{p}(\Omega))$. In addition, we have the following estimate:
\begin{eqnarray}\label{estimation Stokes non homogene}
\Vert \textbf{\textit{u}}\Vert_{W^{1,p}_{0}(\Omega)}+\Vert {\pi}\Vert_{{{L}}^{p}(\Omega)}\leqslant C(\Vert \textbf{\textit{f}}\Vert_{[\mathring{H}_{-1}^{\,\,p'}(\mathrm{div},\Omega)]'}+\Vert \textbf{\textit{h}}\Vert_{W^{-1/p,p}(\Gamma)}).
\end{eqnarray}
\end{theo}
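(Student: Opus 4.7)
\medskip
\noindent\textbf{Proof plan.} The plan is to reduce $(\mathcal{S}_T)$ to a variational problem on the divergence-free subspace $\textbf{\textit{V}}_{0,T}^{\,p}(\Omega)$, solve it via the Babu\v{s}ka--Brezzi framework using the Inf-Sup condition of Lemma~\ref{lemme condition inf sup 2}, and then recover the pressure by a de~Rham-type argument. First I would derive the weak formulation: exploiting $\mathrm{div}\,\textbf{\textit{u}}=0$ and the identity~\eqref{delt1}, the equation reads $\mathbf{curl}\,\mathbf{curl}\,\textbf{\textit{u}}+\nabla\pi=\textbf{\textit{f}}$. Testing against $\boldsymbol{\varphi}\in\textbf{\textit{V}}_{0,T}^{\,p'}(\Omega)$, the pressure term drops out because $\mathrm{div}\,\boldsymbol{\varphi}=0$ and $\boldsymbol{\varphi}\cdot\textbf{\textit{n}}=0$, and Green's formula~\eqref{FG2} converts the curl-curl term into $\int_\Omega\mathbf{curl}\,\textbf{\textit{u}}\cdot\mathbf{curl}\,\boldsymbol{\varphi}\,d\textbf{\textit{x}}-\langle\mathbf{curl}\,\textbf{\textit{u}}\times\textbf{\textit{n}},\boldsymbol{\varphi}\rangle_\Gamma$. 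This yields the problem: find $\textbf{\textit{u}}\in\textbf{\textit{V}}_{0,T}^{\,p}(\Omega)$ such that
\begin{equation*}
\int_\Omega\mathbf{curl}\,\textbf{\textit{u}}\cdot\mathbf{curl}\,\boldsymbol{\varphi}\,d\textbf{\textit{x}}=\langle\textbf{\textit{f}},\boldsymbol{\varphi}\rangle+\langle\textbf{\textit{h}},\boldsymbol{\varphi}\rangle_\Gamma\qquad\forall\boldsymbol{\varphi}\in\textbf{\textit{V}}_{0,T}^{\,p'}(\Omega).
\end{equation*}

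Next I would verify that the right-hand side defines a continuous linear functional on $\textbf{\textit{V}}_{0,T}^{\,p'}(\Omega)$ equipped with the $\textbf{\textit{Z}}^{\,p'}_T(\Omega)$-norm. Since $p>3$ gives $p'<3/2$, Theorem~\ref{injection in X_T  homogene} shows $\boldsymbol{\varphi}\in W^{1,p'}_0(\Omega)$, hence $\boldsymbol{\varphi}\in W^{0,p'}_{-1}(\Omega)$ by the embedding~\eqref{inclusion.sobolev2}; together with $\mathrm{div}\,\boldsymbol{\varphi}=0$ and $\boldsymbol{\varphi}\cdot\textbf{\textit{n}}=0$ this gives $\boldsymbol{\varphi}\in\mathring{H}_{-1}^{\,p'}(\mathrm{div},\Omega)$, so $\langle\textbf{\textit{f}},\boldsymbol{\varphi}\rangle$ makes sense and is bounded, while $\langle\textbf{\textit{h}},\boldsymbol{\varphi}\rangle_\Gamma$ is controlled by the trace inequality~\eqref{3.theorem de trace de H curl}. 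Then, applying Theorem~\ref{Babuchca brezi} (ii) with $X=\textbf{\textit{V}}_{0,T}^{\,p}(\Omega)$, $M=\textbf{\textit{V}}_{0,T}^{\,p'}(\Omega)$ and the bilinear form $(\boldsymbol{\psi},\boldsymbol{\varphi})\mapsto\int_\Omega\mathbf{curl}\,\boldsymbol{\psi}\cdot\mathbf{curl}\,\boldsymbol{\varphi}\,d\textbf{\textit{x}}$, the Inf-Sup estimate~\eqref{condition inf sup 2} of Lemma~\ref{lemme condition inf sup 2} supplies a solution $\textbf{\textit{u}}\in\textbf{\textit{V}}_{0,T}^{\,p}(\Omega)$, unique modulo the kernel $V$ of the form. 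I would rule out a nontrivial kernel by observing that any $\textbf{\textit{w}}\in V$ satisfies $\mathbf{curl}\,\mathbf{curl}\,\textbf{\textit{w}}=0$ in $\Omega$, $\mathrm{div}\,\textbf{\textit{w}}=0$, $\textbf{\textit{w}}\cdot\textbf{\textit{n}}=0$ and $\mathbf{curl}\,\textbf{\textit{w}}\times\textbf{\textit{n}}=\boldsymbol{0}$, which via~\eqref{delt1} gives $-\Delta\textbf{\textit{w}}=\boldsymbol{0}$; combining with $\textbf{\textit{w}}\in W^{1,p}_0(\Omega)$ and the Liouville-type structure on exterior domains forces $\textbf{\textit{w}}=\boldsymbol{0}$.

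To produce the pressure, I would apply a de~Rham-type argument: $\textbf{\textit{f}}+\Delta\textbf{\textit{u}}$ is orthogonal to all compactly supported divergence-free test fields vanishing near $\Gamma$, hence equals $\nabla\pi$ in $\mathcal{D}'(\Omega)$ for some distribution $\pi$, defined up to a constant. The regularity $\pi\in L^p(\Omega)$ follows from the characterization of $[\mathring{H}_{-1}^{\,p'}(\mathrm{div},\Omega)]'$ in Proposition~\ref{caracterisation du H'}, which lets one identify $\textbf{\textit{f}}+\Delta\textbf{\textit{u}}$ as $\boldsymbol{\psi}+\nabla\chi$ with $\chi\in L^p(\Omega)$; uniqueness of the potential yields $\pi=\chi$ up to a constant that can be fixed. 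Once $\pi\in L^p(\Omega)$ is known, the boundary condition $\mathbf{curl}\,\textbf{\textit{u}}\times\textbf{\textit{n}}=\textbf{\textit{h}}$ is recovered from the Green formula~\eqref{formule de grenn dans E^p} on $\textbf{E}^{\,p}(\Omega)$ by comparing with the variational identity on test functions in $\textbf{\textit{V}}_{0,T}^{\,p'}(\Omega)$. Finally, the estimate~\eqref{estimation Stokes non homogene} is assembled from the Babu\v{s}ka--Brezzi bound (i.e.~\eqref{inegalite util ds inf-sup}) applied to $\textbf{\textit{u}}$, Theorem~\ref{injection in X_T  homogene} to upgrade from the $\textbf{\textit{Z}}_T$-norm to the $W^{1,p}_0$-norm (actually used on the dual side via $p'$), and Proposition~\ref{caracterisation du H'} for the pressure.

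The main obstacle I anticipate is the de~Rham step combined with the pressure regularity: the dual space $[\mathring{H}_{-1}^{\,p'}(\mathrm{div},\Omega)]'$ is not a standard Sobolev dual, so extracting a gradient representation that places $\pi$ precisely in $L^p(\Omega)$ (rather than a weighted space) requires careful use of Proposition~\ref{caracterisation du H'} and of the fact that $\Delta\textbf{\textit{u}}=-\mathbf{curl}\,\mathbf{curl}\,\textbf{\textit{u}}$ enjoys the same duality structure as $\textbf{\textit{f}}$. A secondary difficulty is justifying that the kernel $V$ is trivial in the exterior setting without access to Poincar\'e-type bounds on $\textbf{\textit{V}}_{0,T}^{\,p}(\Omega)$ for $p>3$; here the weighted space definition and the behaviour at infinity imposed by membership in $W^{1,p}_0(\Omega)$ must be used explicitly.
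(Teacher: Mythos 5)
Your proposal follows essentially the same route as the paper: reduce $(\mathcal{S}_T)$ to the variational problem on $\textbf{\textit{V}}_{0,T}^{\,p}(\Omega)$, solve it via the Babu\v{s}ka--Brezzi theorem using the inf-sup condition of Lemma~\ref{lemme condition inf sup 2}, recover $\pi$ by a de~Rham argument and the boundary condition by a tangential trace lifting, and assemble the estimate from Theorem~\ref{injection in X_T  homogene}. The only noticeable differences are presentational: you explicitly discuss the triviality of the kernel and obtain the $L^p$-regularity of the pressure through Proposition~\ref{caracterisation du H'}, whereas the paper treats these points more tersely (bounding $\Vert\pi\Vert_{L^{p}(\Omega)}$ directly by $\Vert\nabla\pi\Vert_{W^{-1,p}_{0}(\Omega)}$).
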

\begin{proof}
On the first hand, observe that Problem $(\mathcal{S}_T)$ is reduced to the following variational problem:
\begin{eqnarray}\label{problem variationnel}
\begin{cases}
Find\quad \textbf{\textit{u}}\in\textbf{V}_{0,T}^{p}(\Omega)\quad such \quad that,\\
\quad \displaystyle{\int}_{\Omega}\textbf{curl}\textbf{\textit{u}}.\textbf{curl}\boldsymbol{\varphi} d\textbf{x}=\langle\textbf{\textit{f}},\boldsymbol{\varphi}\rangle_{\Omega}+\left\langle\textbf{\textit{h}},\boldsymbol{\varphi} \right\rangle_{\Gamma}, \forall \boldsymbol{\varphi}\in \textbf{\textit{V}}^{p'}_{0,T}(\Omega)
\end{cases}
\end{eqnarray}
Let us introduce the space 
\begin{equation*}
\label{def.Dsigma}
\mathcal{D}_\sigma(\Omega)=\Big\{\boldsymbol{\varphi}\in\mathcal{D}(\Omega),\,\mathrm{div }\,\boldsymbol{\varphi}=0\Big\}.
\end{equation*}
Indeed, every solution of $(\mathcal{S}_T)$ also solves \eqref{problem variationnel}. Conversely, let us take $\boldsymbol{\varphi}\in\mathcal{D}_{\sigma}(\Omega)$ as the test function in \eqref{problem variationnel}, then we get

\begin{eqnarray}
\langle-\Delta\,\textbf{\textit{u}}-\textbf{\textit{f}},\boldsymbol{\varphi}\rangle_{\mathcal{D}_{\sigma}'(\Omega)\times \mathcal{D}_{\sigma}(\Omega)}=0.
\end{eqnarray}
By De Rham theorem, there exist a function $\pi\in L^{p}(\Omega)$ such that $$-\Delta\,\textbf{\textit{u}}+\nabla\,\pi=\textbf{\textit{f}}\quad \text{ in } \Omega.$$

\noindent Moreover, by the fact that $\textbf{\textit{u}}$ belongs to the space $\textbf{\textit{V}}^{p}_{0,T}(\Omega)$, we have $\mathrm{div}\,\textbf{\textit{u}}=0$ in $\Omega$, $\textbf{\textit{u}}\cdot\textbf{\textit{n}}=0$ on $\Gamma$. The remainder boundary condition $\mathbf{curl}\,\textbf{\textit{u}}\times\textbf{\textit{n}}=\textbf{\textit{H}}$ on $\Gamma$ is implicitly contained in \eqref{problem variationnel}. Observe that since $\nabla\,\pi$ is elements of $[\mathring{H}^{p'}_{-1}(\mathrm{div},\Omega)]'$, it is the same for $\Delta\,\textbf{\textit{u}}$. Since, $\mathcal{D}(\Omega)$ is dense in $\mathring{H}^{p'}_{-1}(\mathrm{div},\Omega)$, it is clear then that for any $\boldsymbol{\varphi}\in \mathring{H}^{p'}_{-1}(\mathrm{div},\Omega)$ 
\begin{eqnarray*}
\langle\nabla\,\pi,\boldsymbol{\varphi}\rangle_{\Omega}=-\displaystyle\int_{\Omega}\pi\mathrm{div}\,\boldsymbol{\varphi}\,d\textbf{\textit{x}},
\quad\forall\boldsymbol{\varphi}\in \mathring{H}^{p'}_{-1}(\mathrm{div};\Omega).
\end{eqnarray*}
\noindent Particularly $$\langle\nabla\,\pi,\boldsymbol{\varphi}\rangle_{\Omega}=0\quad\text{if}\quad\boldsymbol{\varphi}\in {\textbf{V}}^{p'}_{0,\sigma}(\Omega).$$
Moreover, if $\boldsymbol{\varphi}\in \textbf{\textit{V}}^{p'}_{0,T}(\Omega)$, using the Green formula \eqref{formule de grenn dans E^p} we have

\begin{eqnarray}\label{formul vari H div}
\langle -\Delta\,\textbf{\textit{u}},\boldsymbol{\varphi}\rangle_{[\mathring{H}_{-1}^{p'}(\mathrm{div},\Omega)]'\times\mathring{H}_{-1}^{p'}(\mathrm{div},\Omega)}=\int_{\Omega}\mathbf{curl}\,\textbf{\textit{u}}\,.\,\mathbf{curl}\,\varphi\,d\textbf{x}+\langle\mathbf{curl}\,\textbf{\textit{u}}\times\textbf{\textit{n}},\varphi\rangle_{\Gamma}
\end{eqnarray}
Therefore, from \eqref{problem variationnel} and \eqref{formul vari H div} we deduce that for all $\boldsymbol{\varphi}\in \textbf{\textit{V}}^{p'}_{0,T}(\Omega)$
\begin{eqnarray*}
\langle\mathbf{curl}\,\textbf{\textit{u}}\times\textbf{\textit{n}},\boldsymbol{\varphi}\rangle_{\Gamma}=\langle \textbf{\textit{H}},\boldsymbol{\varphi} \rangle_{\Gamma}
\end{eqnarray*}
Let now $\boldsymbol{\mu}$ any element of the space $W^{1-1/p',p'}(\Gamma)$. So, there exists an element $\boldsymbol{\varphi}\in W^{1,p'}_{0}(\Omega)$ such that $\mathrm{div}\,\boldsymbol{\varphi}=0$ in $\Omega$ and $\boldsymbol{\varphi}=\boldsymbol{\mu}_{\tau}$ on $\Gamma$. Then, we have
\begin{eqnarray*}
\langle\mathbf{curl}\,\textbf{\textit{u}}\times\textbf{\textit{n}},\boldsymbol{\mu}\rangle_{\Gamma}-\langle \textbf{\textit{H}},\boldsymbol{\mu} \rangle_{\Gamma} &=& \langle\mathbf{curl}\,\textbf{\textit{u}}\times\textbf{\textit{n}},\boldsymbol{\mu}_{\tau}\rangle_{\Gamma}-\langle \textbf{\textit{H}},\boldsymbol{\mu}_{\tau} \rangle_{\Gamma}\\
&=& \langle\mathbf{curl}\,\textbf{\textit{u}}\times\textbf{\textit{n}},\boldsymbol{\varphi}\rangle_{\Gamma}-\langle \textbf{\textit{H}},\boldsymbol{\varphi} \rangle_{\Gamma}
\end{eqnarray*}
This implies that $\mathbf{curl}\,\textbf{\textit{u}}\times\textbf{\textit{n}}=\textbf{\textit{H}}$ on $\Gamma$. \\

\noindent This implies that problem $(\mathcal{S}_T)$ and problem \eqref{problem variationnel} are equivalent. Now, to solve Problem \eqref{problem variationnel}, we use the Inf-Sup condition \eqref{condition inf sup 2}. We consider the bilinear form $\textbf{a} :$ $\textbf{\textit{V}}_{0,T}^{\,p}(\Omega)\times \textbf{\textit{V}}_{0,T}^{\,p'}(\Omega)\longrightarrow \R$ such that
\begin{eqnarray*}
\textbf{a}(\textbf{\textit{u}},\varphi)=\displaystyle{\int}_{\Omega}\mathbf{curl}\,{\textbf{\textit{u}}}\cdot\mathbf{curl}\,\boldsymbol{\varphi}\,\,\mathrm{d}\textbf{\textit{x}}.
\end{eqnarray*}
Let consider the following mapping $\boldsymbol{\ell}:\,\textbf{\textit{V}}^{\,p'}_{0,T}(\Omega)\longrightarrow\,\R$ such that $\boldsymbol{\ell}(\boldsymbol{\varphi})=\left\langle \textbf{\textit{f}},\boldsymbol{\varphi}\right\rangle _{\Omega}+\langle\textbf{\textit{h}},\boldsymbol{\varphi}\rangle_{\Gamma}$. It is clear that $\boldsymbol{\ell}$ belongs to $(\textbf{\textit{V}}^{\,p'}_{0,T}(\Omega))'$ and according to Remark \ref{remark sur c inf sup}, there exists a unique solution $\textbf{\textit{u}}\in \textbf{\textit{V}}^{\,p}_{0,T}(\Omega)$ of Problem \eqref{problem variationnel}. Due to Theorem \ref{injection in X_T  homogene}, we prove that this solution $\textbf{\textit{u}}$ belongs ro $W^{1,p}_{0}(\Omega)$. It follows from Remark \ref{remark sur c inf sup} $i)$ that

\begin{eqnarray}\label{estimation inf-sup}
\Vert \textbf{\textit{u}}\Vert_{W^{1,p}_{0}(\Omega)}\leqslant C(\Vert \textbf{\textit{f}}\Vert_{[\mathring{H}_{-1}^{\,\,p'}(\mathrm{div},\Omega)]'}+\Vert \textbf{\textit{h}}\Vert_{W^{-1/p,p}(\Gamma)}).
\end{eqnarray}
Since $\nabla\,\pi=\textbf{\textit{f}}+\Delta\,\textbf{\textit{u}}$, then we have the following estimate
\begin{eqnarray}\label{estimation pi}
{\Vert\pi\Vert_{L^{p}(\Omega)}\leqslant  C\Vert\nabla\,\pi\Vert_{W^{-1,p}_{0}(\Omega)}}\leqslant C\big(\Vert \textbf{\textit{f}}\Vert_{[\mathring{H}_{-1}^{p'}(\mathrm{div},\Omega)]'}+\Vert\textbf{\textit{u}}\Vert_{W^{1,p}_{0}(\Omega)}\big)
\end{eqnarray}
The estimate~\eqref{estimation Stokes non homogene} follows from  \eqref{estimation inf-sup} and \eqref{estimation pi}.
\end{proof}
\noindent We can also solve the Stokes problem when the divergence operator does not vanish and $g\neq 0$.
\begin{coro}\label{solution faible chi diferent de zero}
Assume that $p>3$. Let $\textbf{\textit{f}}$, $\chi$, $g$, $\textbf{\textit{h}}$ such that
\begin{equation*}
\textbf{\textit{f}}\in[ \mathring{H}_{-1}^{\,p'}(\mathrm{div},\,\Omega)]',\,\chi \in L^{\,p}(\Omega),\,\, g \in W^{\,1-1/p,p}(\Gamma)\,\,\,\mathrm{ and}\,\, \,\,\textbf{\textit{h}}\in W^{\,-1/p,p}(\Gamma).
\end{equation*}

\noindent Then, the Stokes problem $(\mathcal{S}_T)$ has a unique solution $(\textbf{\textit{u}}\,,\,\pi)\in W_{0}^{\,1,p}(\Omega)\times L^{\,p}(\Omega)$ and we have:
\begin{equation}\label{estimation Stokes non homogene chi}
 \Vert\textbf{\textit{u}}\,\Vert_{W_{0}^{\,1,p}(\Omega)}+ \Vert\,\pi\,\Vert_{L^{\,p}(\Omega)} \leqslant  C\left(  \Vert\,\textbf{\textit{f}}\,\Vert_{[\mathring{H}_{-1}^{\,p'}(\mathrm{div},\,\Omega)]'}+\Vert \,\chi\,\Vert_{L^{\,p}(\Omega)}+\Vert g\Vert_{W^{\,1-1/p,p}(\Gamma)}+ \Vert\textbf{\textit{h}}\Vert_{W^{\,-1/p,p}(\Gamma)}\right) .
\end{equation}
\end{coro}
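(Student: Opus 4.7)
The plan is to reduce the general problem to the homogeneous case $\chi = g = 0$ already treated in Theorem \ref{solution chi=0} by constructing a lifting $\u_0 \in W^{1,p}_0(\Omega)$ that carries the divergence datum $\chi$ and the normal trace $g$. The substitution $\v := \u - \u_0$, keeping the pressure $\pi$ unchanged, will then turn $(\mathcal{S}_T)$ into an equivalent system with vanishing divergence and vanishing normal trace, which is exactly the setting of Theorem \ref{solution chi=0}.

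To build $\u_0$, I would solve the Neumann problem
\begin{equation*}
\Delta \theta = \chi \ \text{in } \Omega, \qquad \frac{\partial \theta}{\partial \n} = g \ \text{on } \Gamma.
\end{equation*}
Since $\chi \in L^{p}(\Omega)$ and $g \in W^{1-1/p,p}(\Gamma) = W^{1/p',p}(\Gamma)$, the strong-regularity part of Theorem \ref{theorem Neumann} produces a solution $\theta \in W^{2,p}_{0}(\Omega)$, unique modulo $\mathcal{N}^{\Delta}_{p,-1}(\Omega)$, with $\|\theta\|_{W^{2,p}_{0}(\Omega)} \leq C\bigl(\|\chi\|_{L^{p}(\Omega)} + \|g\|_{W^{1-1/p,p}(\Gamma)}\bigr)$. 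Setting $\u_0 := \nabla \theta$ yields $\u_0 \in W^{1,p}_{0}(\Omega)$ with $\mathrm{div}\,\u_0 = \Delta\theta = \chi$, $\u_0 \cdot \n = \partial\theta/\partial\n = g$ on $\Gamma$, and the bonus property $\mathbf{curl}\,\u_0 = \0$, which makes the tangential boundary condition decouple as well.

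With $\v := \u - \u_0$, problem $(\mathcal{S}_T)$ becomes
\begin{equation*}
-\Delta \v + \nabla \pi = \widetilde{\f}, \quad \mathrm{div}\,\v = 0 \ \text{in } \Omega, \qquad \v \cdot \n = 0, \quad \mathbf{curl}\,\v \times \n = \h \ \text{on } \Gamma,
\end{equation*}
with modified force $\widetilde{\f} := \f + \Delta \u_0 = \f + \nabla \chi$. The new term $\nabla \chi$ lies in $[\mathring{H}^{p'}_{-1}(\mathrm{div},\Omega)]'$ by the sufficient direction of Proposition \ref{caracterisation du H'}, taking the decomposition $\boldsymbol{\psi} = \0 \in W^{0,p}_{1}(\Omega)$ and scalar part $\chi \in L^{p}(\Omega) = W^{0,p}_{0}(\Omega)$; a direct duality computation even gives $\|\nabla\chi\|_{[\mathring{H}^{p'}_{-1}(\mathrm{div},\Omega)]'} \leq \|\chi\|_{L^{p}(\Omega)}$. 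Applying Theorem \ref{solution chi=0} to the pair $(\widetilde{\f}, \h)$ then provides a unique $(\v,\pi) \in W^{1,p}_{0}(\Omega) \times L^{p}(\Omega)$ obeying \eqref{estimation Stokes non homogene}.

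Finally $\u := \v + \u_0 \in W^{1,p}_{0}(\Omega)$ is the desired solution of $(\mathcal{S}_T)$, and \eqref{estimation Stokes non homogene chi} follows by summing $\|\u_0\|_{W^{1,p}_{0}(\Omega)}$ with the estimate for $(\v,\pi)$. Uniqueness is automatic: the difference of two solutions sharing the same data solves the homogeneous problem and therefore vanishes by the uniqueness part of Theorem \ref{solution chi=0}. The only delicate ingredient of the whole argument is the $W^{2,p}_{0}$ regularity of $\theta$ extracted from Theorem \ref{theorem Neumann}; every subsequent step is linear bookkeeping of norms.
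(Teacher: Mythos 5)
Your proposal is correct and follows essentially the same route as the paper: lift $\chi$ and $g$ by the gradient of a Neumann solution $\theta\in W^{2,p}_{0}(\Omega)$ obtained from Theorem~\ref{theorem Neumann}, reduce to Theorem~\ref{solution chi=0} with modified force $\textbf{\textit{f}}+\nabla\chi$, and sum the estimates. Your sign convention $\Delta\theta=\chi$ is in fact the consistent one (the paper writes $-\Delta\theta=\chi$ yet still claims $\mathrm{div}\,\textbf{\textit{z}}=0$ and force $\textbf{\textit{f}}+\nabla\chi$), and your explicit check that $\nabla\chi\in[\mathring{H}_{-1}^{\,p'}(\mathrm{div},\Omega)]'$ via Proposition~\ref{caracterisation du H'} makes precise a step the paper only asserts.
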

\begin{proof}
Let $\chi\in L^{p}(\Omega)$ and $g\in W^{1-1/p,p}(\Gamma)$. We solve the following Neumann problem in $\Omega$:
\begin{equation}\label{Neumann avec done chi m=1}
-\Delta\,\theta=\chi\quad\mathrm{in}\,\, \Omega,\quad \frac{\partial \theta}{\partial \textbf{\textit{n}}}=g\quad\mathrm{on}\,\, \Gamma.
\end{equation}
It follows from Theorem \ref{theorem Neumann}, that the Problem \eqref{Neumann avec done chi m=1} has a solution $\theta$ in $W_{0}^{\,2,p}(\Omega)$ and we have:
\begin{equation}\label{estimation de vivet}
||\theta||_{W_{0}^{\,2,p}(\Omega)} \leqslant C \left( ||\chi||_{L^{\,p}(\Omega)}+||g||_{W^{1-1/p,p}(\Gamma)}\right) .
\end{equation}
Setting $\textbf{\textit{z}}=\textbf{\textit{u}}-\nabla\,\theta$, then Problem $(\mathcal{S}_T)$  becomes: Find $(\textbf{\textit{z}},\pi) \in W_{0}^{1,p}(\Omega)\times L^{\,p}(\Omega)$ such that 
\begin{equation}
\begin{cases}
-\Delta\,\textbf{\textit{z}}+\nabla\,\pi=\textbf{\textit{f}}+\nabla\,\chi\quad\mathrm{and}\quad
\mathrm{div}\,\textbf{\textit{z}}=0 &\quad\mathrm{in}\,\Omega,\\
\textbf{\textit{z}}\cdot\textbf{\textit{n}}=0\quad\mathrm{and}\quad\mathbf{curl}\,\textbf{\textit{z}}\times\textbf{\textit{n}}=\textbf{\textit{h}}&\quad\mathrm{on}\,\mathrm{\Gamma},\\ 
\end{cases}
\end{equation}
Observe that $\textbf{\textit{f}}+\nabla\,\chi$ belongs to $[ \mathring{H}_{-1}^{\,p'}(\mathrm{div},\,\Omega)]'$. According to Theorem \ref{solution chi=0}, this problem has a unique solution $(\textbf{\textit{z}}\,,\,\pi)\in W_{0}^{\,1,p}(\Omega)\times L^{\,p}(\Omega)$. Thus $\textbf{\textit{u}}=\textbf{\textit{z}}+\nabla\,\theta$ belongs to $W_{0}^{\,1,p}(\Omega)$ and estimate \eqref{estimation Stokes non homogene chi} follows from \eqref{estimation Stokes non homogene} and \eqref{estimation de vivet}.
\end{proof}


\end{document}